\theoremstyle{plain}
\numberwithin{equation}{section}
\newtheorem{thm}{Theorem}[section]
\newtheorem{theorem}[thm]{Theorem}
\newtheorem{lemma}[thm]{Lemma}
\newtheorem{proposition}[thm]{Proposition}
\begin{document}

\setcounter{page}{1}

\title[Proofs of some binomial identities using the method of last squares]{Proofs of some binomial identities using the method of last squares}
\author{Mark Shattuck}
\address{Department of Mathematics\\
         University of Tennessee\\
         Knoxville \\
         TN 37996-1300, USA}
\email{shattuck@math.utk.edu}
\author{Tam\'{a}s Waldhauser}
\address{Mathematics Research Unit\\
         University of Luxembourg\\
         6 rue Richard Coudenhove-Kalergi\\
         L-1359 Luxembourg, Luxembourg, and\\
         Bolyai Institute\\
         University of Szeged\\
         Aradi v\'{e}rtan\'{u}k tere 1\\
         H-6720 Szeged, Hungary}
\email{twaldha@math.u-szeged.hu}

\begin{abstract}
We give combinatorial proofs for some identities involving binomial sums that have no closed form.
\end{abstract}

\dedicatory{Dedicated to P\'{e}ter Hajnal on his fiftieth birthday}

\maketitle

\section{Introduction}

The main result of this paper is a combinatorial proof of the following
identity for $0\leq r\leq\frac{m}{2}-1$:%
\[
\sum_{i=r+1}^{\left\lfloor \frac{m}{2}\right\rfloor }\binom{m}{2i}\binom
{i-1}{r}=2^{m-1-2r}\sum_{k=0}^{\left\lfloor \frac{r}{2}\right\rfloor }%
\binom{m-3-r-2k}{r-2k}+\left(  -1\right)  ^{r+1}\text{.}%
\]
The sum on the left-hand side appeared in connection with investigations
about the arity gap of polynomial functions \cite{GapInfC}. There, only the fact that this sum
is always odd was needed, which is not hard to prove by induction. Clearly,
the right-hand side reveals a much stronger divisibility property.

In the course of the proof we will give three other expressions for the same
sum. Before presenting these, let us introduce the following notation.%
\begin{align*}
S_{n,r} &  =\sum_{i=r+1}^{\left\lfloor \frac{n}{2}\right\rfloor }\binom{n}%
{2i}\binom{i-1}{r}\\
T_{n,r} &  =\sum_{j=r+1}^{n}\binom{n}{j}\binom{j-1}{r}\\
U_{n,r} &  =\sum_{j=r+1}^{n}\binom{j-1}{r}2^{j-1-r}\\
V_{n,r} &  =\sum_{j=1}^{n-r}\binom{n-1-j}{r-1}2^{n-r-j}\left(2^{j}-1\right)\\
W_{n,r} &  =2^{n-r}\sum_{k=0}^{\left\lfloor \frac{r}{2}\right\rfloor }%
\binom{n-2-2k}{r-2k}+\left(  -1\right)  ^{r+1}%
\end{align*}
We will prove the following identities relating these five sums.

\begin{theorem}
\label{thm main}For all $0\leq r\leq\frac{m}{2}-1$, we have
\[
S_{m,r}=T_{m-1-r,r}=U_{m-1-r,r}=V_{m-1-r,r}=W_{m-1-r,r}.
\]
\end{theorem}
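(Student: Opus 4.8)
The plan is to organize the entire argument around a single combinatorial family and to enumerate it by several different statistics. Write $[n]=\{1,\dots,n\}$ and let $P(n,r)$ be the set of pairs $(A,R)$ with $R\subseteq A\subseteq[n]$, $|R|=r$, and $\max A\notin R$, so that $A$ possesses an unmarked element larger than every marked one. Counting $P(n,r)$ by three statistics should recover $T$, $U$, and $V$ at once. Grouping by $|A|=j$ gives $\binom{n}{j}$ choices of $A$ and $\binom{j-1}{r}$ ways to mark $r$ of its $j-1$ non-maximal elements, so $|P(n,r)|=T_{n,r}$. Grouping by $\max A=j$ gives $\binom{j-1}{r}$ ways to choose $R\subseteq\{1,\dots,j-1\}$ and $2^{j-1-r}$ free choices for the remaining elements of $A$ below $j$, so $|P(n,r)|=U_{n,r}$. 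Grouping by $j=n-\max R$, the number of integers exceeding the largest mark (of which $A$ must contain a nonempty subset), gives $\binom{p-1}{r-1}2^{p-r}(2^{n-p}-1)$ with $p=\max R=n-j$, which is exactly the summand of $V_{n,r}$. This settles $T_{n,r}=U_{n,r}=V_{n,r}$ simultaneously.

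For $S_{m,r}=T_{m-1-r,r}$ I would seek a bijection between $P(m-1-r,r)$ and a reading of $S_{m,r}$ in which $\binom{m}{2i}$ selects an even-size subset $B$ of $[m]$ and $\binom{i-1}{r}$ marks $r$ among the $i$ consecutive pairs of $B$, excluding the top pair. The splitting $m=(m-1-r)+r+1$ suggests that such a correspondence should encode a pair $(A,R)\in P(m-1-r,r)$ together with a single parity-fixing bit as an even subset of $[m]$; this generalizes the case $r=0$, where nonempty even subsets of $[m]$ biject with arbitrary nonempty subsets of $[m-1]$ by toggling the membership of $m$ to repair parity. I expect this to be the principal obstacle, since the even-size constraint and the index shift $m\mapsto m-1-r$ must be reconciled while transporting the marks. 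Should a clean bijection prove elusive, I would instead show that both sides satisfy a common recurrence, expanding $\binom{m}{2i}=\binom{m-2}{2i-2}+2\binom{m-2}{2i-1}+\binom{m-2}{2i}$ on the $S$-side, and then match initial values.

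It remains to reach $W_{m-1-r,r}$, which I would do by proving $V_{n,r}=W_{n,r}$ by induction on $r$. Splitting the factor $2^{j}-1$ in the definition of $V$ and applying hockey-stick summation gives the reduction $V_{n,r}=2^{n-r}\binom{n-1}{r}-U_{n-1,r-1}$. Since $U_{n-1,r-1}=V_{n-1,r-1}=W_{n-1,r-1}$ by the first paragraph together with the inductive hypothesis, substituting the closed form of $W_{n-1,r-1}$ reduces the claim to the binomial identity
\[
\sum_{k\ge0}\binom{n-2-2k}{r-2k}=\binom{n-1}{r}-\sum_{k\ge0}\binom{n-3-2k}{r-1-2k},
\]
which follows because the two sums on the right interleave (over even and odd values of $t$) into the single diagonal sum $\sum_{t\ge0}\binom{n-2-t}{r-t}$, and this equals $\binom{n-1}{r}$ by hockey-stick summation. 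The correction term $(-1)^{r+1}$ is precisely the sign carried along by the recursion, since $-(-1)^{r}=(-1)^{r+1}$. The degenerate case $r=0$, where the $V$-formula must be read through the convention forced by $|P(n,0)|=2^{n}-1$, I would dispose of as a separate base case, as would the small values needed to start the induction.
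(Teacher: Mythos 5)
Your triple count of $P(n,r)$ by $|A|$, by $\max A$, and by $\max R$ is correct and is in substance identical to the paper's Propositions~\ref{prop T}, \ref{prop U} and \ref{prop V}: your pairs $(A,R)$ are exactly the arrangements in $\mathcal{B}_{n,r}^{+}$ (take $A$ to be the set of non-white cells and $R$ the set of black cells; the condition $\max A\notin R$ says the last non-white cell is decorated), so $T=U=V$ is fine. Your route from $V$ to $W$ is genuinely different from the paper's and it does check out: splitting $2^{n-r-j}(2^{j}-1)=2^{n-r}-2^{n-r-j}$ and reindexing gives $V_{n,r}=2^{n-r}\binom{n-1}{r}-U_{n-1,r-1}$, the two sums $\sum_{k}\binom{n-2-2k}{r-2k}$ and $\sum_{k}\binom{n-3-2k}{r-1-2k}$ do interleave into $\sum_{t\geq 0}\binom{n-2-t}{r-t}=\binom{n-1}{r}$, and the sign $-(-1)^{r}=(-1)^{r+1}$ propagates correctly through the induction (with base case $V_{n,0}=2^{n}-1=W_{n,0}$). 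The paper instead proves $T_{n,r}=W_{n,r}$ via a sign-reversing involution (Lemma~\ref{lemma conjugation}) pairing odd-weight arrangements of $\mathcal{B}_{n,r}^{+}$ with even-weight arrangements of $\mathcal{B}_{n,r}^{-}$ up to one exceptional arrangement, and then counts even-weight arrangements directly. Your induction is shorter but trades away the combinatorial character that is the paper's stated point; the involution also explains the $(-1)^{r+1}$ as a single unmatched object rather than as a sign dragged through a recursion.

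The genuine gap is the first link, $S_{m,r}=T_{m-1-r,r}$: you correctly identify it as the principal obstacle and then leave it open, offering neither the bijection nor the completed recurrence-plus-initial-values argument. Without it the chain never starts. The paper's resolution is to introduce a second tiling family $\mathcal{D}_{m,r}^{+}$: boards of length $m$ covered by $r$ dominos and $m-2r$ black/white squares, first cell black, with at least one white square after the last domino. The summand $\binom{m}{2i}\binom{i-1}{r}$ is realized not by pairing up the elements of an even-size subset, as you propose, but by reading the $2i$ chosen cells as color-change markers (color left to right, starting black, switching after each chosen cell) and then replacing $r$ of the $i-1$ admissible white-to-black transitions by dominos straddling the transition. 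The bijection to $\mathcal{B}_{m-1-r,r}^{+}$ then deletes the forced first cell and the left half of each domino --- this is precisely what reconciles the index shift $m\mapsto m-1-r$ that you flag as the difficulty --- while recoding the first cell of each monochromatic run as a decorated square and each domino's right half as a black square. If you want to avoid tilings, your fallback recurrence from $\binom{m}{2i}=\binom{m-2}{2i-2}+2\binom{m-2}{2i-1}+\binom{m-2}{2i}$ is not obviously self-contained either, since the middle term produces sums over odd lower indices that are not of the form $S_{m',r'}$; you would need to handle those or work instead with the generating function $x^{2r+2}/\bigl((1-x)(1-2x)^{r+1}\bigr)$ on both sides. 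As it stands, the proposal proves $T=U=V=W$ but not the theorem.
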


Nowadays, such identities can be proven automatically thanks to the machinery
developed by Petkov\v{s}ek, Wilf and Zeilberger \cite{A=B}, but we believe that the problem of finding combinatorial proofs in the spirit of \cite{Ben} is still of interest. Let us mention that the above
sums have no closed form. Indeed, considering, e.g., $f\left(  n\right)
=T_{2n,n}$, creative telescoping \cite{Zb,Ekhad} finds the recurrence%
\[
\left(  24n^{2}+44n+16\right)\!f\left(  n\right)  +\left(
21n^{2}+37n+14\right)\!f\left(  n+1\right)  -\left( 3n^{2}+7n+2\right)\!f\left(  n+2\right)  =0,
\]
and algorithm Hyper \cite{Hyper} shows that the only hypergeometric solutions of this
recurrence are the functions of the form $f\left(  n\right)  =c\left(
-1\right)  ^{n}$. Clearly, $T_{2n,n}$ is not such a function, hence it does
not have a hypergeometric closed form. This implies that $T_{n,r}$ and
$T_{m-1-r,r}$ do not have closed forms either, and then
Theorem~\ref{thm main} shows that the other four sums also do not have closed forms. However, $W_{m-1-r,r}$ stands out from the five expressions, since it
is the only one where the number of summands is independent of $m$; hence it
may be regarded as a closed form, if $m$ is considered as the only variable (with
$r$ regarded as a parameter).  Furthermore, one can show that the five expressions in Theorem~\ref{thm main} have common generating function
\[
\sum_{m \geq 2r+2}S_{m,r}x^m=\frac{x^{2r+2}}{(1-x)(1-2x)^{r+1}},
\]
for fixed $r \geq 0$.

Let us also note that replacing $i-1$ by $i$ in $S_{m,r}$ yields a simple closed form and the resulting
identity is one of the well-known Moriarty formulas (see, e.g., \cite{Gould,Sh}):%
\[
\sum_{i=r}^{\left\lfloor \frac{m}{2}\right\rfloor }\binom{m}{2i}\binom{i}%
{r}=2^{m-1-2r}\binom{m-r}{r}\frac{m}{m-r}.
\]
Similarly, replacing $j-1$ by $j$ in $T_{n,r}$, we get the easy-to-prove
identity%
\[
\sum_{j=r}^{n}\binom{n}{j}\binom{j}{r}=2^{n-r}\binom{n}{r}.
\]

The following table shows the value of $T_{n,r}$ for $n=1,\ldots,10$ and $r=0,\ldots,9$.

\medskip

\begin{center}
\renewcommand{\arraystretch}{1.2}%
\begin{tabular}
[c]{|r||r|r|r|r|r|r|r|r|r|r|}\hline
& $0$ & $1$ & $2$ & $3$ & $4$ & $5$ & $6$ & $7$ & $8$ & $9$\\\hline\hline
$1$ & $1$ &  &  &  &  &  &  &  &  & \\\hline
$2$ & $3$ & $1$ &  &  &  &  &  &  &  & \\\hline
$3$ & $7$ & $5$ & $1$ &  &  &  &  &  &  & \\\hline
$4$ & $15$ & $17$ & $7$ & $1$ &  &  &  &  &  & \\\hline
$5$ & $31$ & $49$ & $31$ & $9$ & $1$ &  &  &  &  & \\\hline
$6$ & $63$ & $129$ & $111$ & $49$ & $11$ & $1$ &  &  &  & \\\hline
$7$ & $127$ & $321$ & $351$ & $209$ & $71$ & $13$ & $1$ &  &  & \\\hline
$8$ & $255$ & $769$ & $1023$ & $769$ & $351$ & $97$ & $15$ & $1$ &  & \\\hline
$9$ & $511$ & $1793$ & $2815$ & $2561$ & $1471$ & $545$ & $127$ & $17$ & $1$ &
\\\hline
$10$ & $1023$ & $4097$ & $7423$ & $7937$ & $5503$ & $2561$ & $799$ & $161$ &
$19$ & $1$\\\hline
\end{tabular}
\end{center}

\medskip

\noindent This table appears in OEIS (up to signs and other minor alterations) as A118801, A119258 and A145661 \cite{OEIS}. The formula given for A118801
is equivalent to $W_{n,r}$, while the formula given for A119258 is equivalent to $T_{n,r}$.

The proof of Theorem~\ref{thm main} will be presented in the next section as a
sequence of six propositions. First we define certain arrangements of dominos
and squares, tiling a $1\times m$ board, and prove that the number of such arrangements is $S_{m,r}$ (see Proposition~\ref{prop S} below).
Then we define another kind of arrangement, where we tile a $1\times n$ board by
three kinds of squares, and show that $T_{n,r},\ U_{n,r}$ and $V_{n,r}$ count the number of such arrangements (Propositions~\ref{prop T}, \ref{prop U} and \ref{prop V}). In Proposition~\ref{prop S=T} we give a
bijection between the two kinds of arrangements with $n=m-1-r$, thereby
proving the identity $S_{m,r}=T_{m-1-r,r}$. Finally, we consider $W_{n,r}$: in
Lemma~\ref{lemma conjugation}, perhaps the trickiest
part of the proof, we give a bijection between two special subsets of
arrangements, and in Proposition~\ref{prop T=W} we use this bijection to prove the
identity $T_{n,r}=W_{n,r}$. The title of the paper is explained by the fact that several times in the course of the proof, squares towards the right end of the board
(e.g., squares after the last domino) will play a crucial role.
\subsubsection*{Acknowledgments.}
The second named author acknowledges that the present project is supported by the National Research Fund, Luxembourg, and cofunded
under the Marie Curie Actions of the European Commission (FP7-COFUND), and supported by
the Hungarian National Foundation for Scientific Research under grant no. K77409.

\section{Proofs}

We will consider coverings of a board of length $m$ (i.e., a
$1\times m$ \textquotedblleft chessboard\textquotedblright) by dominos and
white and black squares:

\begin{center}
\includegraphics[height=0.322in]{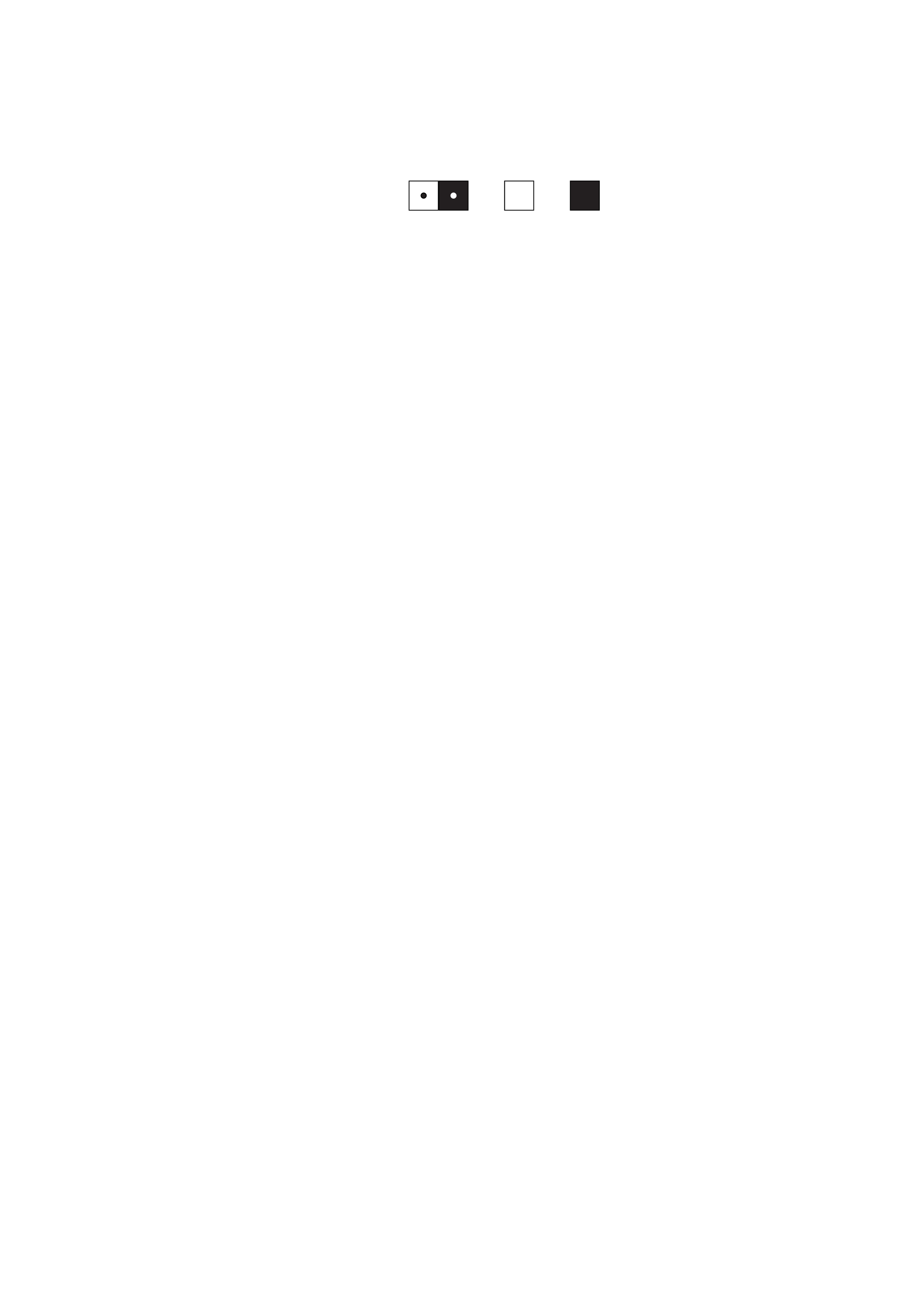}
\end{center}

\noindent Each domino covers two consecutive cells of the board, and the dominos may not be turned around: the white part of the domino
is always on the left. We will refer to such coverings as arrangements,
and we will denote by $\mathcal{D}_{m,r}$ the set of all arrangements
containing $r$ dominos and $m-2r$ squares such that the first (leftmost)
cell of the board is covered by a black square. We partition this set into
two subsets depending on the colors of the \emph{last squares}, i.e., the
colors of the squares to the right of the last (rightmost) domino: let $\mathcal{D}%
_{m,r}^{-}\subseteq\mathcal{D}_{m,r}$ denote the set of those arrangements,
where all squares to the right of the last domino are black (if any), and let
$\mathcal{D}_{m,r}^{+}=\mathcal{D}_{m,r}\setminus\mathcal{D}_{m,r}^{-}$ denote
the set of those arrangements where there is at least one white square to the
right of the last domino (not necessarily immediately adjacent to the domino).
Here is an example of an arrangement belonging to $\mathcal{D}_{17,3}^{+}$:

\begin{center}
\includegraphics[width=5.5in]{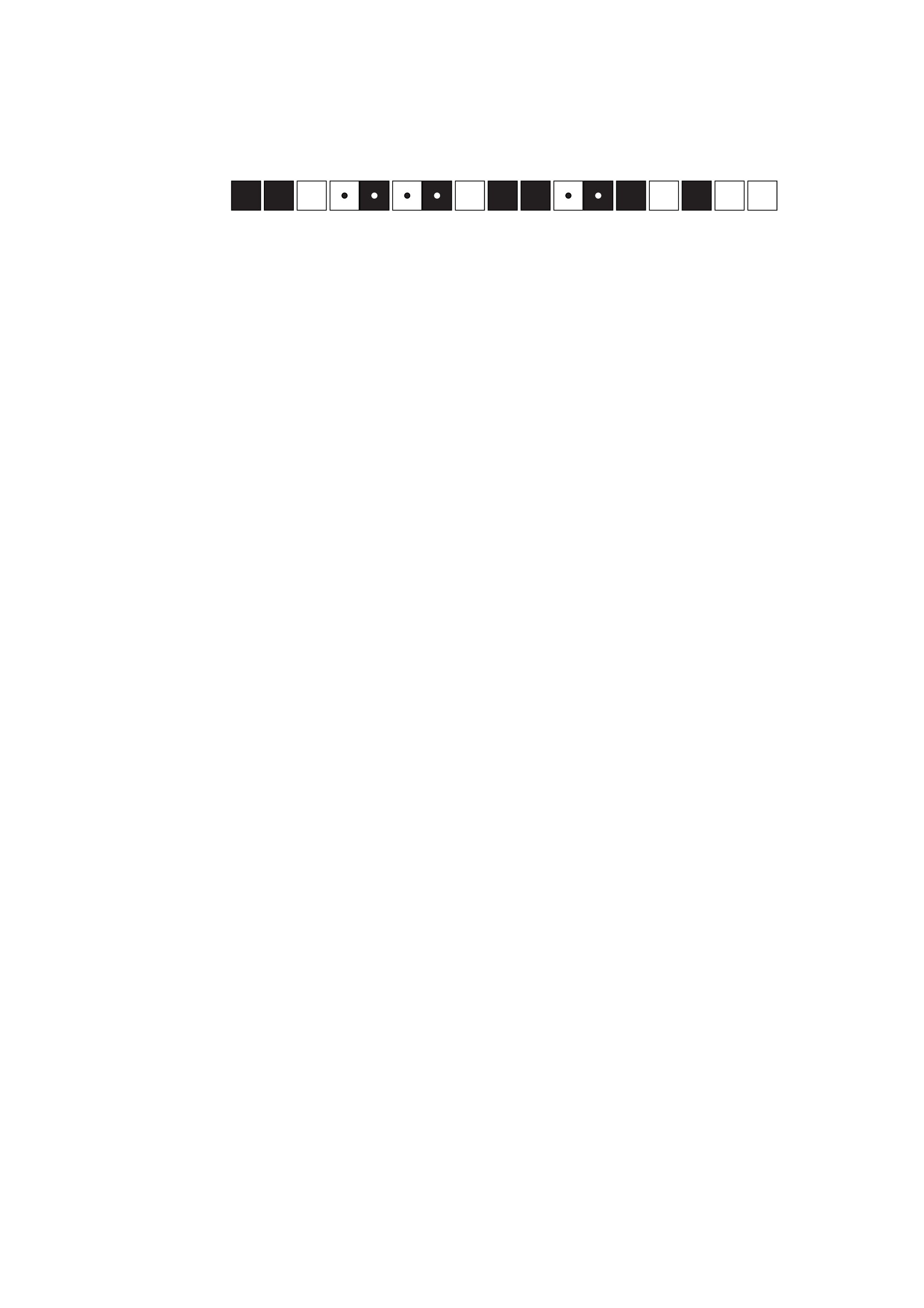}
\end{center}

In the following proposition we count the arrangements of $\mathcal{D}%
_{m,r}^{+}$.

\begin{proposition}\label{prop S}
For all $0\leq r\leq\frac{m}{2}-1$, we have $\left\vert \mathcal{D}_{m,r}%
^{+}\right\vert =S_{m,r}$.
\end{proposition}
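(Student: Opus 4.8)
The plan is to count $\mathcal{D}_{m,r}^{+}$ directly by introducing a single statistic $i$ and showing that the arrangements with a given value of $i$ number exactly $\binom{m}{2i}\binom{i-1}{r}$, so that summing over $r+1\le i\le\left\lfloor m/2\right\rfloor$ reproduces $S_{m,r}$. Concretely, I would construct a bijection between $\mathcal{D}_{m,r}^{+}$ and the disjoint union, over all admissible $i$, of the sets of pairs $(A,R)$ in which $A\subseteq\{1,\dots,m\}$ has even cardinality $2i$ and $R\subseteq\{1,\dots,i-1\}$ has cardinality $r$; the factor $\binom{m}{2i}$ then chooses $A$ and the factor $\binom{i-1}{r}$ chooses $R$. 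The point to keep in mind throughout is that no power of two occurs in $S_{m,r}$: the white/black choices of the squares must therefore be recorded not by a separate factor $2^{\bullet}$, but by the choice of $A$ itself, so that $\binom{m}{2i}$ simultaneously encodes the positions \emph{and} the colours of the squares.

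The encoding I have in mind scans the board from left to right and turns the colour pattern into the set $A$ by a running-parity rule (for instance, letting $A$ collect those cells at which the number of white squares seen so far switches parity). The black square forced onto the first cell is precisely the degree of freedom one must spend to make $\left\vert A\right\vert$ even: this is the combinatorial source of the evenness of $2i$, and it mirrors the elementary identity $\sum_{i}\binom{m}{2i}=2^{m-1}$, under which binary strings with a prescribed first entry correspond to even subsets. The $r$ dominos are then recorded by $R$: grouping $A$ into the $i$ consecutive pairs $(a_{1},a_{2}),\dots,(a_{2i-1},a_{2i})$, the dominos occupy $r$ of the pair-slots, and the requirement that at least one white square lie to the right of the last domino singles out the final pair as an \emph{anchor} that may carry no domino. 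This is what cuts the number of available slots from $i$ down to $i-1$ and forces $i\ge r+1$. The squares after the last domino --- the \emph{last squares} of the title --- are exactly the data that pin down the right-hand end of the encoding and make the parity bookkeeping close up.

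The main obstacle will be to make this correspondence genuinely bijective rather than merely parity-consistent. Two features pull against each other: a domino must cover two \emph{adjacent} board cells, whereas the pairs $(a_{2k-1},a_{2k})$ arising from $A$ need not be adjacent, so the rule deciding which pair-slots become dominos, and how the interspersed black squares are laid out, must be engineered with care and then shown to invert. I expect the delicate case analysis to concentrate entirely at the right end of the board, around the anchor supplied by the last white square, exactly as the title advertises.

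As an independent check on the final count I would verify the generating-function identity
\[
\sum_{m\ge 2r+2}S_{m,r}\,x^{m}=\sum_{i\ge r+1}\binom{i-1}{r}\frac{x^{2i}}{(1-x)^{2i+1}}=\frac{x^{2r+2}}{(1-x)(1-2x)^{r+1}},
\]
and confirm that the same rational function arises from decomposing an arrangement of $\mathcal{D}_{m,r}^{+}$ as an initial black square, followed by $r$ blocks each consisting of a run of two-coloured squares capped by a domino, and finally a run of squares containing at least one white square. The boundary case $r=0$, where $\left\vert\mathcal{D}_{m,0}^{+}\right\vert=2^{m-1}-1=S_{m,0}$, gives a reassuring sanity check.
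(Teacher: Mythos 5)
Your overall strategy is the paper's: stratify $\mathcal{D}_{m,r}^{+}$ by a statistic $i$, encode the positions \emph{and} colours of the squares into a single even subset $A$ of size $2i$ (with the forced black first cell supplying the evenness, exactly as in $\sum_{i}\binom{m}{2i}=2^{m-1}$), and record the dominos by an $r$-subset of $i-1$ admissible slots, the last slot being forbidden by the ``at least one white square after the last domino'' condition. All of that is right, and so are your sanity checks. But the proof is not actually there yet: the one concrete encoding rule you offer --- ``cells at which the number of white squares seen so far switches parity'' --- does not work, since the count of white squares changes parity at \emph{every} white cell, so $A$ would simply be the set of white cells and would have no reason to be of even cardinality. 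The encoding that does work is: colour the cells from left to right starting with black and switch colour immediately after each element of $A$; equivalently, recover $A$ as the set of last cells of the maximal colour runs, where the final run contributes its last cell precisely when the board ends in white. This is a genuine bijection between even subsets of $\{1,\dots,m\}$ and colourings whose first cell is black.

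The second gap is the domino placement, which you flag as the main obstacle but leave unresolved; in fact your worry that the pairs $(a_{2k-1},a_{2k})$ need not be adjacent evaporates once the dominos are placed correctly. The white-to-black transitions of the colouring occur immediately after $a_{2},a_{4},\dots$, so choosing the slot indexed by $k\le i-1$ means placing a domino on the two \emph{adjacent} cells $a_{2k}$ and $a_{2k}+1$ (the last white cell of a white run and the first black cell of the following black run); adjacency is automatic, and the inverse map simply reads off the midpoints of the dominos. Excluding the $i$-th transition guarantees that the white run ending at $a_{2i}$ lies to the right of the last domino, which is exactly the membership condition for $\mathcal{D}_{m,r}^{+}$ and is what forces $i\ge r+1$. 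With these two pieces made explicit, your outline becomes the paper's proof; as written, the central bijection is asserted rather than constructed.
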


\begin{proof}
We give an interpretation for the sum $S_{m,r}$ that is in a one-to-one
correspondence with the arrangements in $\mathcal{D}_{m,r}^{+}$. First we
choose $2i$ squares of our board of length $m$ (in this example $m=17$ and
$i=5$):

\begin{center}
\includegraphics[width=5.5in]{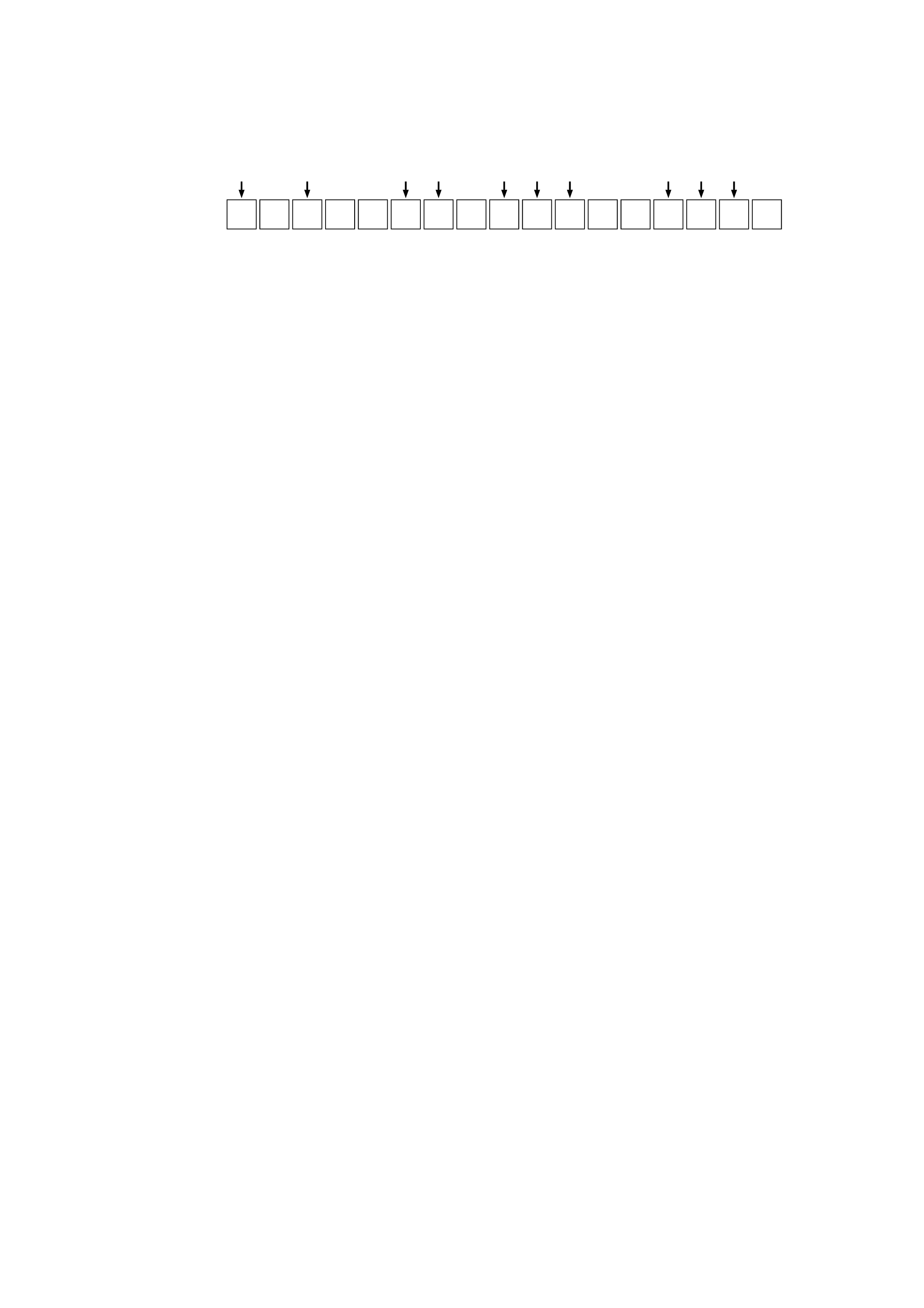}
\end{center}

\noindent Then we color the squares of the board one by one from left to right,
starting with black on the first square, and changing the color after every
chosen square:

\begin{center}
\includegraphics[width=5.5in]{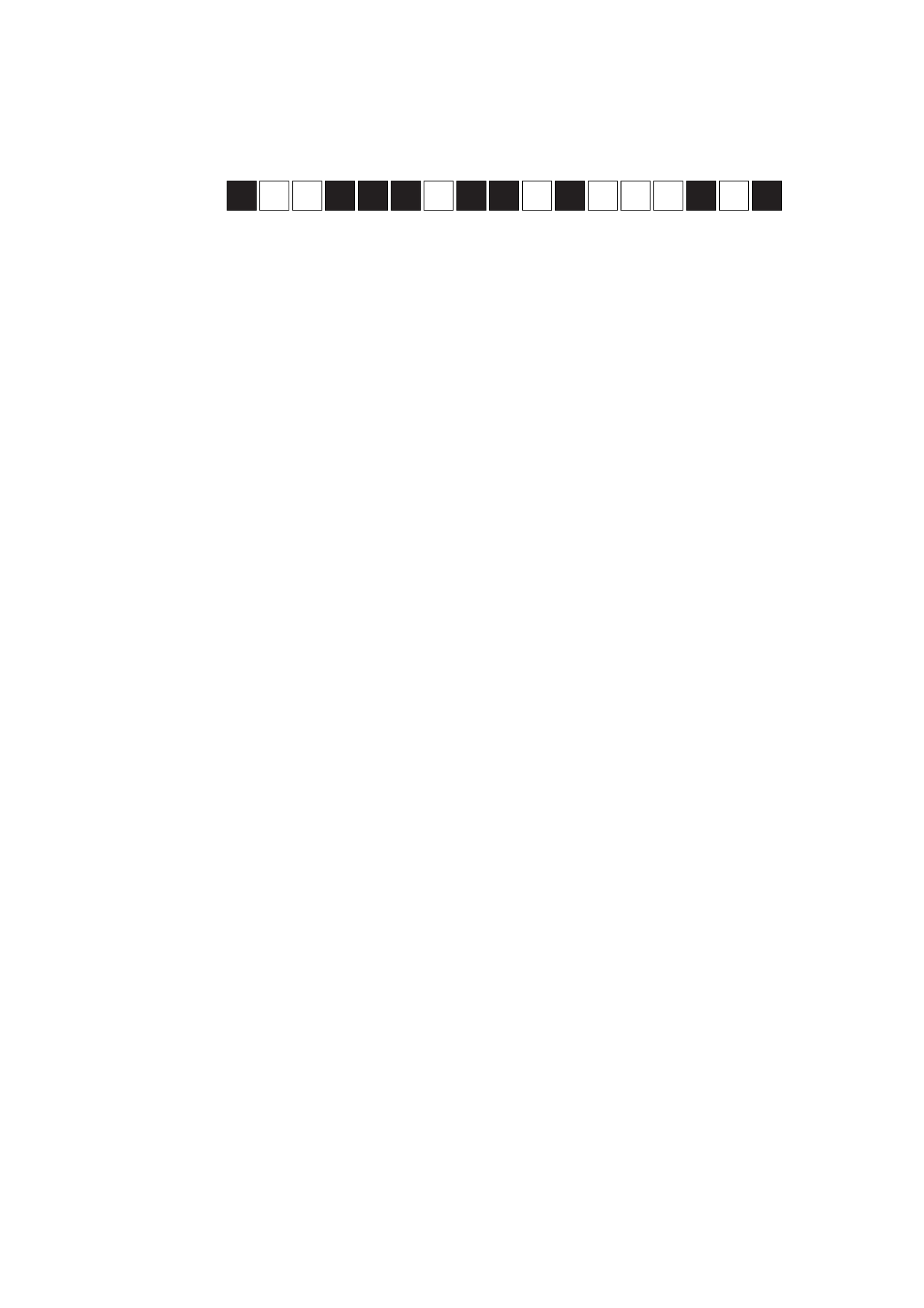}
\end{center}

\noindent There are $i-1$ or $i$ places on the board where the color is
changing from white to black (going from left to right), depending on whether or not the
last square was among the $2i$ chosen squares (the above example corresponds
to the second case). We choose $r$ of these places with the restriction that in
the second case we are not allowed to choose the last place (in this example
$r=2$):

\begin{center}
\includegraphics[width=5.5in]{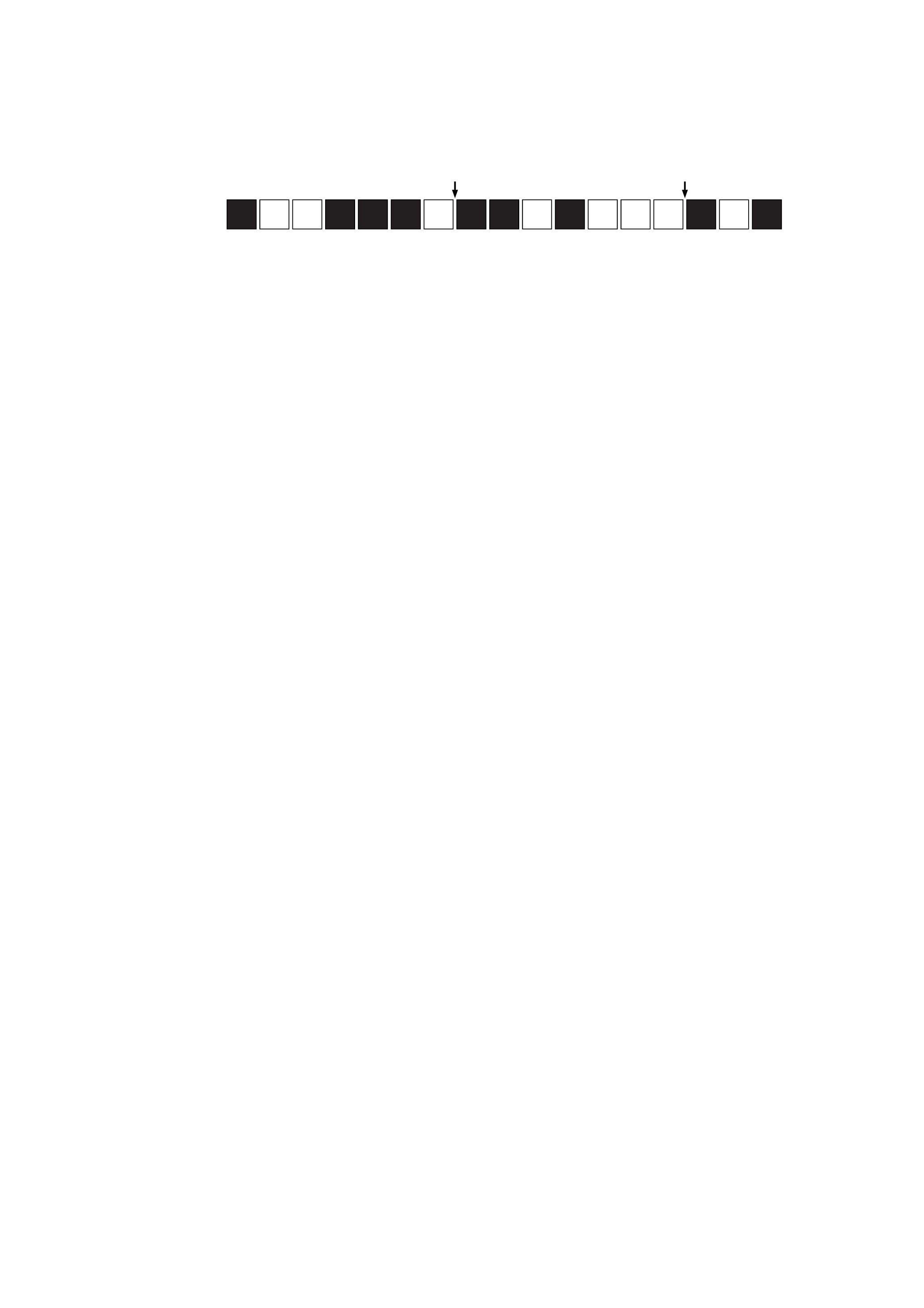}
\end{center}

Clearly, the number of such colored boards with $r$ marks over some
white-to-black color changes is $S_{m,r}$. Putting dominos in the $r$ marked
places such that the middle of each domino is exactly at the place where the
color changes from white to black (and removing the marks), we obtain an
arrangement belonging to $\mathcal{D}_{m,r}^{+}$:

\begin{center}
\includegraphics[width=5.5in]{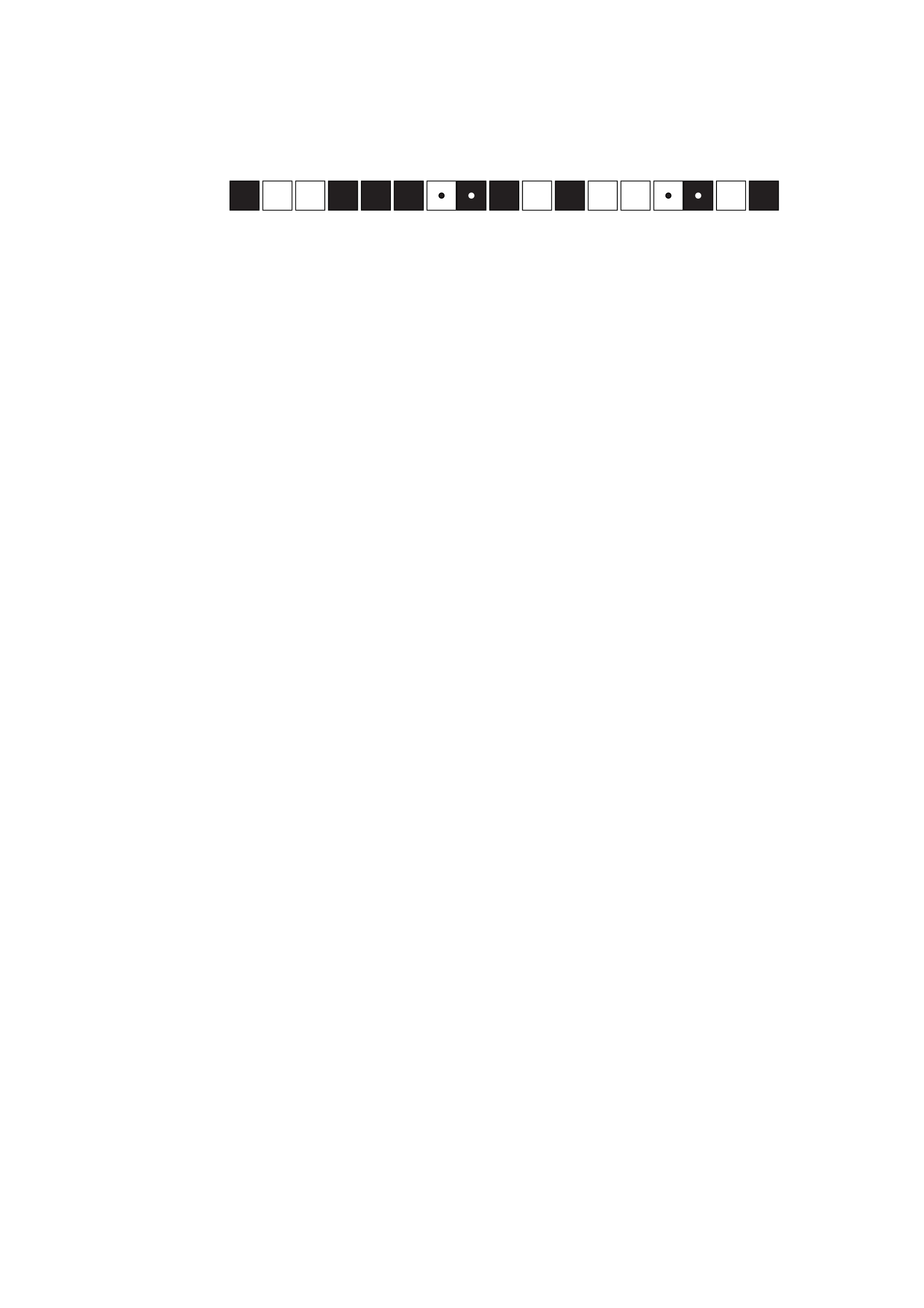}
\end{center}

\noindent This assignment is a bijection: to obtain the inverse, just put a
mark over the middle of each domino, and then remove the dots from the dominos.
\end{proof}

We will interpret $T_{n,r},\ U_{n,r}$ and $V_{n,r}$ using another arrangement. Let
us cover a board of length $n$ with three kinds of squares:\ white
squares, black squares and (white) squares decorated by a triangle (for
brevity, we will refer to the latter as a decorated square):

\begin{center}
\includegraphics[height=0.322in]{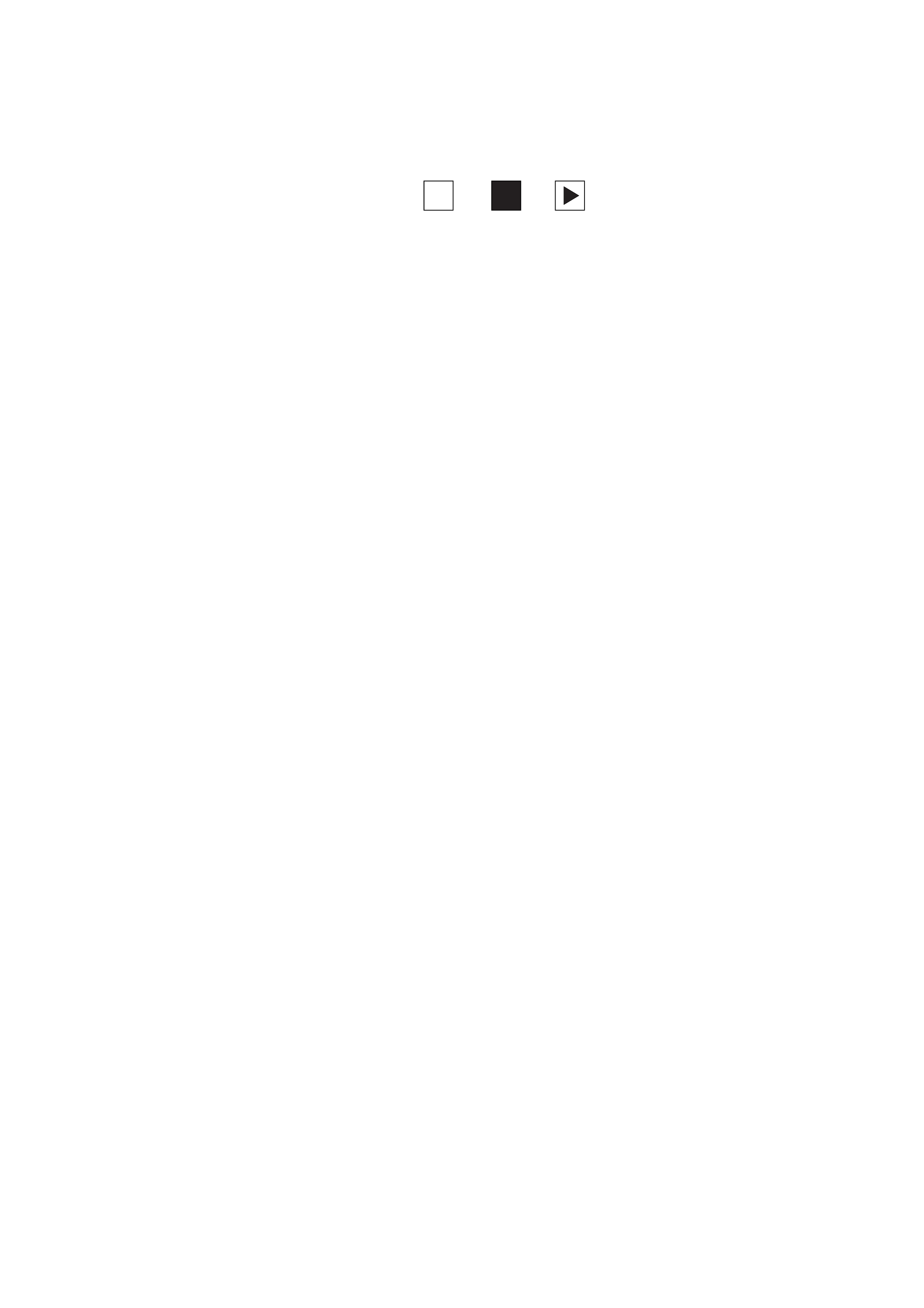}
\end{center}

\noindent Let $\mathcal{B}_{n,r}$ denote the set of those arrangements where
the number of black squares is $r$, and the final (rightmost) square is not
black (i.e., it is either white or decorated). Just as in the case of $\mathcal{D}_{m,r}$, we
consider the \emph{last squares}, namely those squares to the right of the
last black square: let $\mathcal{B}_{n,r}^{-}\subseteq\mathcal{B}_{n,r}$
denote the set of those arrangements, where all squares to the right of the
last black square are white (if any), and let $\mathcal{B}_{n,r}%
^{+}=\mathcal{B}_{n,r}\setminus\mathcal{B}_{n,r}^{-}$ denote the set of those
arrangements where there is at least one decorated square to the right of the
last black square (not necessarily immediately adjacent to the black square). Here
is an example of an arrangement belonging to $\mathcal{B}_{17,6}^{+}$:

\begin{center}
\includegraphics[width=5.5in]{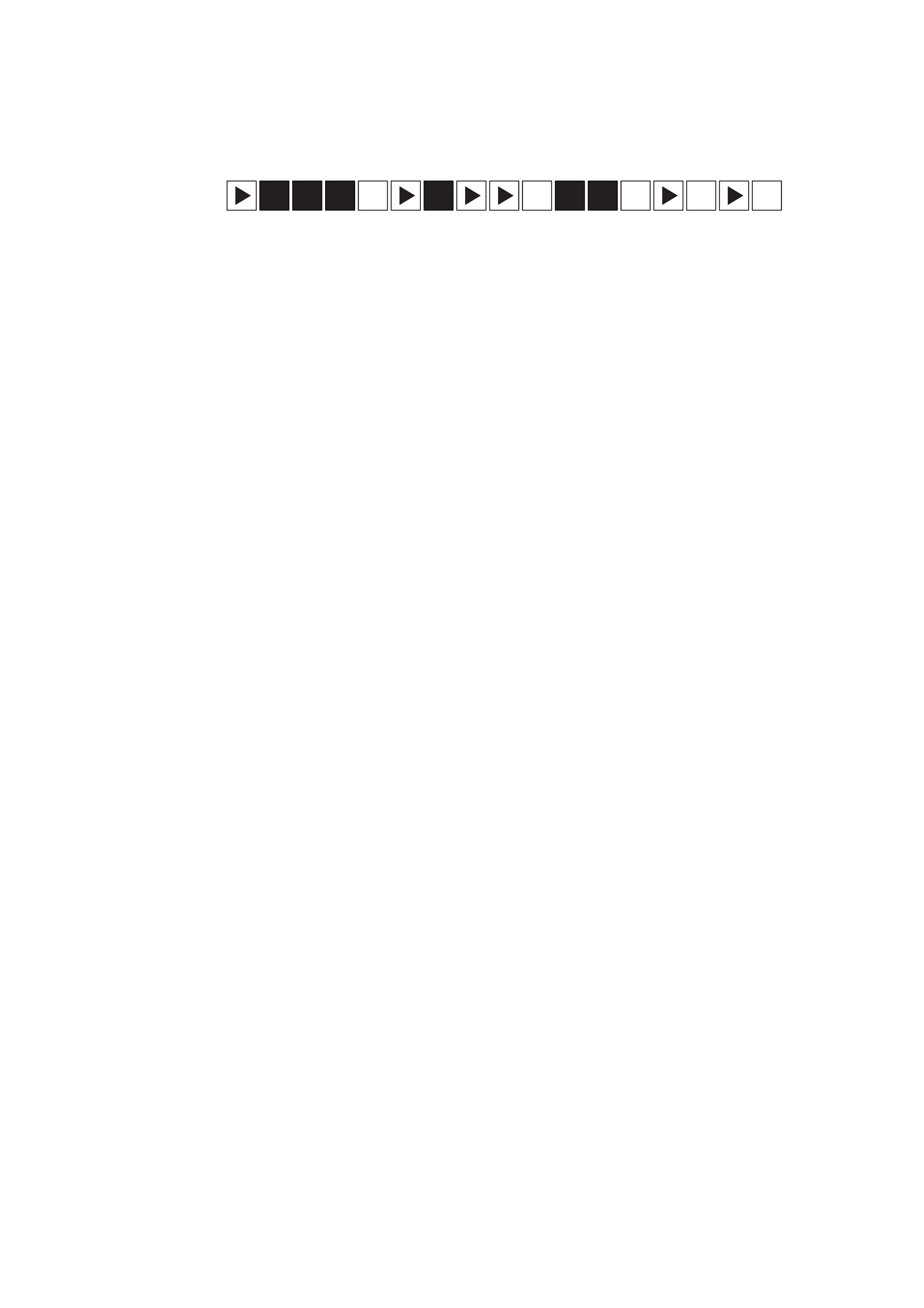}
\end{center}

Let us define the \emph{weight} of an arrangement in $\mathcal{B}%
_{n,r}$ as follows. If the second-to-last square of the board is not black, then
the weight is $0$ (recall that the last square is never black). Otherwise, the
weight is the length of the interval of consecutive black squares ending at
the second-to-last square of the board. The example above is of weight $0$, and the arrangement $\alpha$
appearing in the proof of Lemma~\ref{lemma conjugation} below is of weight $3$. Let
$\mathcal{B}_{n,r}^{\text{e}},\mathcal{B}_{n,r}^{\text{o}}$ denote the set of
arrangements of even, odd weight, respectively, and let us define the sets
$\mathcal{B}_{n,r}^{+,\text{e}},\mathcal{B}_{n,r}^{+,\text{o}},\mathcal{B}%
_{n,r}^{-,\text{e}},\mathcal{B}_{n,r}^{-,\text{o}}$ by taking $\mathcal{B}_{n,r}^{+,\text{e}}=\mathcal{B}_{n,r}^{+}\cap\mathcal{B}_{n,r}^{\text{e}}$, etc.

In the following three propositions, we count the arrangements in $\mathcal{B}_{n,r}^{+}$ in three different ways,
thereby proving the identity $T_{n,r}=U_{n,r}=V_{n,r}$.

\begin{proposition}\label{prop T}
For all $0\leq r\leq n-1$, we have $\left\vert \mathcal{B}_{n,r}^{+}\right\vert
=T_{n,r}$.
\end{proposition}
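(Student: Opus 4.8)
The plan is to count $\mathcal{B}_{n,r}^{+}$ by sorting its arrangements according to the total number $j$ of \emph{non-white} squares, where by non-white I mean a square that is either black or decorated. The first step is to record a structural observation that recasts the defining condition of $\mathcal{B}_{n,r}^{+}$ into a more convenient form: an arrangement in $\mathcal{B}_{n,r}$ belongs to $\mathcal{B}_{n,r}^{+}$ precisely when its rightmost non-white square is decorated. Indeed, the requirement ``there is at least one decorated square to the right of the last black square'' forces the rightmost non-white square not to be black (were it black, every cell beyond it would be white, leaving no decorated square past the last black one), and conversely a decorated rightmost non-white square automatically lies to the right of every black square. This reformulation also makes membership in $\mathcal{B}_{n,r}$ transparent: if the rightmost non-white square is decorated, then every cell to its right is white, so the final cell of the board is white or decorated, hence never black, as required.

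With this characterization in hand, I would count directly. Fix $j$ with $r+1\leq j\leq n$; the lower bound is forced because an arrangement of $\mathcal{B}_{n,r}^{+}$ must contain its $r$ black squares together with at least one decorated square, namely the rightmost non-white one. Choose the $j$ positions of the non-white squares in $\binom{n}{j}$ ways; the rightmost of these is compelled to be decorated and so contributes no further choice. Among the remaining $j-1$ non-white positions, select the $r$ that are black in $\binom{j-1}{r}$ ways, the other $j-1-r$ being decorated. This produces exactly $\binom{n}{j}\binom{j-1}{r}$ arrangements having $j$ non-white squares, and summing over $j$ gives $\left\vert \mathcal{B}_{n,r}^{+}\right\vert =\sum_{j=r+1}^{n}\binom{n}{j}\binom{j-1}{r}=T_{n,r}$.

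I expect the only delicate point to be the boundary behavior of the structural reformulation, particularly the case $r=0$, where there is no ``last black square''; here the condition should be read as demanding a decorated square somewhere on the board, which is exactly the statement that the rightmost non-white square is decorated, and a quick check confirms $\left\vert \mathcal{B}_{n,0}^{+}\right\vert =2^{n}-1=T_{n,0}$. Apart from getting this equivalence stated cleanly, the argument is a routine decomposition and I anticipate no genuine obstacle.
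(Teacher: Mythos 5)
Your proposal is correct and follows essentially the same route as the paper: both count $\mathcal{B}_{n,r}^{+}$ by the number $j$ of non-white (i.e., black or decorated) squares, force the rightmost of these to be decorated, and choose the $r$ black squares from the remaining $j-1$ in $\binom{j-1}{r}$ ways. Your explicit justification that membership in $\mathcal{B}_{n,r}^{+}$ is equivalent to the rightmost non-white square being decorated is a worthwhile detail that the paper leaves implicit, but it does not change the argument.
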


\begin{proof}
Choose $j$ cells from the board, cover all other cells by white squares,
cover the last one of the chosen cells by a decorated square, and then put $r$
black squares and $j-1-r$ decorated squares on the remaining $j-1$ chosen cells.
\end{proof}

\begin{proposition}\label{prop U}
For all $0\leq r\leq n-1$, we have $\left\vert \mathcal{B}_{n,r}^{+}\right\vert
=U_{n,r}$.
\end{proposition}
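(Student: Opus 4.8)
The plan is to count $\mathcal{B}_{n,r}^{+}$ directly by classifying each arrangement according to the position of its rightmost decorated square, and to check that summing the resulting counts reproduces $U_{n,r}$ term by term, with the summation index $j$ playing the role of that position. First I would note that any arrangement in $\mathcal{B}_{n,r}^{+}$ contains at least one decorated square (by the defining ``$+$'' property there is a decorated square to the right of the last black square), so the rightmost decorated square is well defined; let $j$ denote its position.

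The key step is to determine the structure of the board to the right of position $j$. I would claim that in any arrangement of $\mathcal{B}_{n,r}^{+}$ all of the cells $j+1,\ldots,n$ carry white squares and every black square lies among the cells $1,\ldots,j-1$. The first part is immediate, since $j$ is the \emph{last} decorated square. For the second part I would argue that no black square can occur to the right of position $j$: if one did, then the last black square would lie strictly to the right of the last decorated square, so there would be no decorated square after the last black square, contradicting membership in $\mathcal{B}_{n,r}^{+}$. This is exactly the point where the ``$+$'' condition (rather than the general set $\mathcal{B}_{n,r}$) is essential, and I expect it to be the main obstacle: it forces a careful appeal to the definition of $\mathcal{B}_{n,r}^{+}$ in terms of the last squares.

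Granting the claim, the enumeration for a fixed value of $j$ is routine. Position $j$ is decorated and positions $j+1,\ldots,n$ are white, so the arrangement is determined by its first $j-1$ cells, which must contain all $r$ black squares. I would choose the $r$ cells among these $j-1$ that are black in $\binom{j-1}{r}$ ways, and fill each of the remaining $j-1-r$ cells independently with either a white or a decorated square, giving $2^{j-1-r}$ possibilities. The admissible range is $r+1\leq j\leq n$, since we need at least $r$ cells before position $j$ to accommodate the black squares. Summing over $j$ therefore yields exactly
\[
\left\vert \mathcal{B}_{n,r}^{+}\right\vert =\sum_{j=r+1}^{n}\binom{j-1}{r}2^{j-1-r}=U_{n,r}.
\]
Finally, I would remark that this correspondence is a genuine bijection: from any admissible triple (a position $j$, a choice of black cells, and a white/decorated assignment) one recovers a unique arrangement of $\mathcal{B}_{n,r}^{+}$, and conversely the value of $j$ is read off as the position of the last decorated square, so no arrangement is counted twice and none is omitted.
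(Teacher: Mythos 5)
Your proposal is correct and follows essentially the same route as the paper: both classify the arrangements of $\mathcal{B}_{n,r}^{+}$ by the position $j$ of the last decorated square, observe that all squares to its right must be white (so all $r$ black squares lie among the first $j-1$ cells), and obtain the summand $\binom{j-1}{r}2^{j-1-r}$. Your write-up merely spells out in more detail the step the paper dispatches with ``by the definition of $\mathcal{B}_{n,r}^{+}$.''
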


\begin{proof}
We claim that the summand of $U_{n,r}$ counts those arrangements in $\mathcal{B}_{n,r}^{+}$ where the last decorated square appears on cell $j$.  First let us observe that the squares to the right of the last decorated square are all white, by the definition of $\mathcal{B}_{n,r}^{+}$.
Thus, we may choose the $r$ black squares from the $j-1$ squares to the left of the last decorated square in $\binom{j-1}{r}$ many ways, and then we may
decorate the squares in an arbitrary subset of the remaining $j-1-r$ squares in $2^{j-1-r}$ many ways.
\end{proof}

\begin{proposition}\label{prop V}
For all $0\leq r\leq n-1$, we have $\left\vert \mathcal{B}_{n,r}%
^{+}\right\vert =V_{n,r}$.
\end{proposition}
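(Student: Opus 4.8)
The plan is to enumerate $\mathcal{B}_{n,r}^{+}$ by partitioning it according to $j$, the number of squares lying to the right of the last black square, and to show that the summand of $V_{n,r}$ counts exactly those arrangements with a given value of $j$. Since an arrangement in $\mathcal{B}_{n,r}^{+}$ has, by definition, at least one decorated square to the right of its last black square, we have $j\geq 1$; and since all $r$ black squares (including the last one, which then occupies cell $n-j$) lie in the first $n-j$ cells, we need $n-j\geq r$, that is, $j\leq n-r$. Thus $j$ ranges over $1,2,\ldots,n-r$, in agreement with the limits of summation in $V_{n,r}$.

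First I would count, for a fixed $j$, the arrangements of $\mathcal{B}_{n,r}^{+}$ having exactly $j$ squares to the right of the last black square. Each of these $j$ rightmost squares is white or decorated, and at least one is decorated (this is precisely the condition distinguishing $\mathcal{B}_{n,r}^{+}$ from $\mathcal{B}_{n,r}^{-}$), which contributes the factor $2^{j}-1$. The last black square sits on cell $n-j$, leaving $n-j-1$ cells to its left. Among these I choose $r-1$ cells for the remaining black squares in $\binom{n-j-1}{r-1}=\binom{n-1-j}{r-1}$ ways, and I fill each of the other $(n-j-1)-(r-1)=n-r-j$ left-hand cells freely with a white or decorated square, contributing $2^{n-r-j}$. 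The product of these three factors is the summand $\binom{n-1-j}{r-1}2^{n-r-j}(2^{j}-1)$, and summing over $j$ gives $V_{n,r}$.

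The steps above are essentially routine once the correct statistic $j$ is identified, so I expect the main points requiring care to be bookkeeping. I would verify that this assignment really is a bijection: the chosen cell $n-j$ is black while everything to its right is non-black, so it is genuinely the last black square, and the remaining $r-1$ black squares all fall to its left, so the total number of black squares is $r$ with the prescribed last square. I would also confirm that the constraints $j\geq 1$ and $j\leq n-r$ exactly reproduce the range of summation, and that $j\geq 1$ automatically forces the rightmost cell of the board to be non-black, as required for membership in $\mathcal{B}_{n,r}$. The one genuinely degenerate case is $r=0$, where there is no last black square and the statistic $j$ is not defined; here I would argue separately that $\mathcal{B}_{n,0}^{+}$ consists of the arrangements with no black square and at least one decorated square, of which there are $2^{n}-1$ (consistent with Proposition~\ref{prop T}), and check that this agrees with the value of $V_{n,0}$ under the reading of the boundary term $\binom{-1}{-1}$ that the formula requires.
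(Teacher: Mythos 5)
Your proof is correct and follows essentially the same route as the paper: both decompose $\mathcal{B}_{n,r}^{+}$ according to the position $n-j$ of the last black square, obtaining the factors $\binom{n-1-j}{r-1}$, $2^{n-r-j}$, and $2^{j}-1$ in the same way. Your extra attention to the degenerate case $r=0$ (where the paper's phrase ``the preceding $r-1$ black squares'' and the term $\binom{n-1-j}{-1}$ require a convention) is a point of care the paper's proof glosses over, but it does not change the argument.
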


\begin{proof}
We claim that the summand of $V_{n,r}$ counts those arrangements in
$\mathcal{B}_{n,r}^{+}$ where the last black square appears on cell $n-j$; note that by the definition of $\mathcal{B}_{n,r}^{+}$, we
have $j\geq1$.  The preceding $r-1$ black squares can be chosen in $\binom{n-1-j}{r-1}$ ways.
The remaining $n-r$ squares can then either be white or decorated, with the restriction that
at least one of the $j$ squares to the right of the last black square has to be decorated.
Thus we can determine the white and decorated squares in $2^{n-r-j}\left( 2^{j}-1 \right)$ many ways, so the total number of possibilities is%
\[
\binom{n-1-j}{r-1}\cdot2^{n-r-j}\cdot\left(  2^{j}-1\right)  ,
\]
as claimed.
\end{proof}

The next proposition relates the two kinds of arrangements considered so far
and proves $S_{m,r}=T_{m-1-r,r}$.

\begin{proposition}\label{prop S=T}
For all $0\leq r\leq\frac{m}{2}-1$, we have $\left\vert \mathcal{D}_{m,r}%
^{+}\right\vert =\left\vert \mathcal{B}_{m-1-r,r}^{+}\right\vert $.
\end{proposition}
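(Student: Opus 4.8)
The plan is to bypass the counting formulas entirely and exhibit an explicit, tile-by-tile bijection $\Phi\colon\mathcal{D}_{m,r}^{+}\to\mathcal{B}_{m-1-r,r}^{+}$; set $n=m-1-r$ throughout. The starting observation is that, since the leftmost cell of any arrangement in $\mathcal{D}_{m,r}$ is black and the white half of every domino lies on its left, the leftmost tile must be a black square. I would first strip off this leading black square. What remains is a tiling of a board of length $m-1$ by the $r$ dominos and the $m-1-2r$ remaining squares of the original arrangement, that is, a sequence of exactly $(m-1)-r=n$ tiles. The map $\Phi$ then rewrites these $n$ tiles, from left to right, as the $n$ squares of a $\mathcal{B}$-arrangement via the dictionary: domino $\mapsto$ black square, original black square $\mapsto$ white square, and white square $\mapsto$ decorated square.

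Next I would check that $\Phi$ actually lands in $\mathcal{B}_{n,r}^{+}$. The piece counts take care of themselves: each of the $r$ dominos yields exactly one black square, so the image has precisely $r$ black squares and $n$ squares in all. The crucial structural point is that in a $\mathcal{D}_{m,r}^{+}$-arrangement $\alpha$ there is at least one white square to the right of the last domino, so the last tile is a square, never a domino; hence its image is white or decorated and the rightmost square of $\Phi(\alpha)$ is non-black, placing $\Phi(\alpha)$ in $\mathcal{B}_{n,r}$. Finally, because dominos map to black squares (so the last domino becomes the last black square) and white squares map to decorated squares, a white square to the right of the last domino is carried to a decorated square to the right of the last black square. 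This is exactly why the membership condition defining $\mathcal{D}_{m,r}^{+}$ is turned into the one defining $\mathcal{B}_{n,r}^{+}$.

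To finish, I would describe the inverse, which applies the reverse dictionary (black square $\mapsto$ domino, white square $\mapsto$ black square, decorated square $\mapsto$ white square) and then prepends a black square; a short length count ($2r+(n-r)+1=m$) confirms that the result is a genuine length-$m$ arrangement of $\mathcal{D}_{m,r}$ with first cell black, and the same condition-matching read backwards shows it lies in $\mathcal{D}_{m,r}^{+}$. The only real subtlety -- and where I would be most careful -- is the simultaneous bookkeeping of all four boundary conditions (first cell black in $\mathcal{D}$, rightmost square non-black in $\mathcal{B}$, and the two ``$+$'' conditions) under a single fixed dictionary; in particular one must verify that the last tile is never a domino in $\mathcal{D}_{m,r}^{+}$, since this is precisely what guarantees that the image is a legal $\mathcal{B}$-arrangement.
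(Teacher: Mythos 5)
Your proof is correct, but the bijection you construct is genuinely different from the one in the paper. Both arguments start from the same two observations you isolate: the first tile of a $\mathcal{D}_{m,r}$-arrangement must be a black square (a domino begins with a white cell), and in a ``$+$'' arrangement the last tile cannot be a domino; both delete that leading black square, send each domino to a single black square (so the lengths match, $n=m-1-r$), and translate ``white square after the last domino'' into ``decorated square after the last black square.'' The difference is where the decorations go. You apply a purely local dictionary (white $\mapsto$ decorated, black $\mapsto$ white, domino $\mapsto$ black), whereas the paper decorates the \emph{first square of each maximal monochromatic run} --- it records the positions where the color changes --- and turns every other non-domino square white. The two maps really are distinct: for $m=5$, $r=1$, the arrangement (black square, domino, white, white) is sent by the paper's map to (black, decorated, white) and by yours to (black, decorated, decorated). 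Your version is easier to verify, since no global run structure has to be tracked and only the boundary conditions need checking, exactly as you do. The paper's version is a color-change encoding that meshes with the interpretation of $S_{m,r}$ in Proposition~\ref{prop S}, where the chosen cells mark precisely the color changes. Either way, the argument is complete and the proposition is proved.
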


\begin{proof}
We construct a bijection from $\mathcal{D}_{m,r}^{+}$ to $\mathcal{B}%
_{m-1-r,r}^{+}$ as follows. An arrangement in $\mathcal{D}_{m,r}^{+}$
naturally divides the board into black and white intervals (regarding a domino
as a white square followed by a black square). Let us mark the first square of
each interval:

\begin{center}
\includegraphics[width=5.5in]{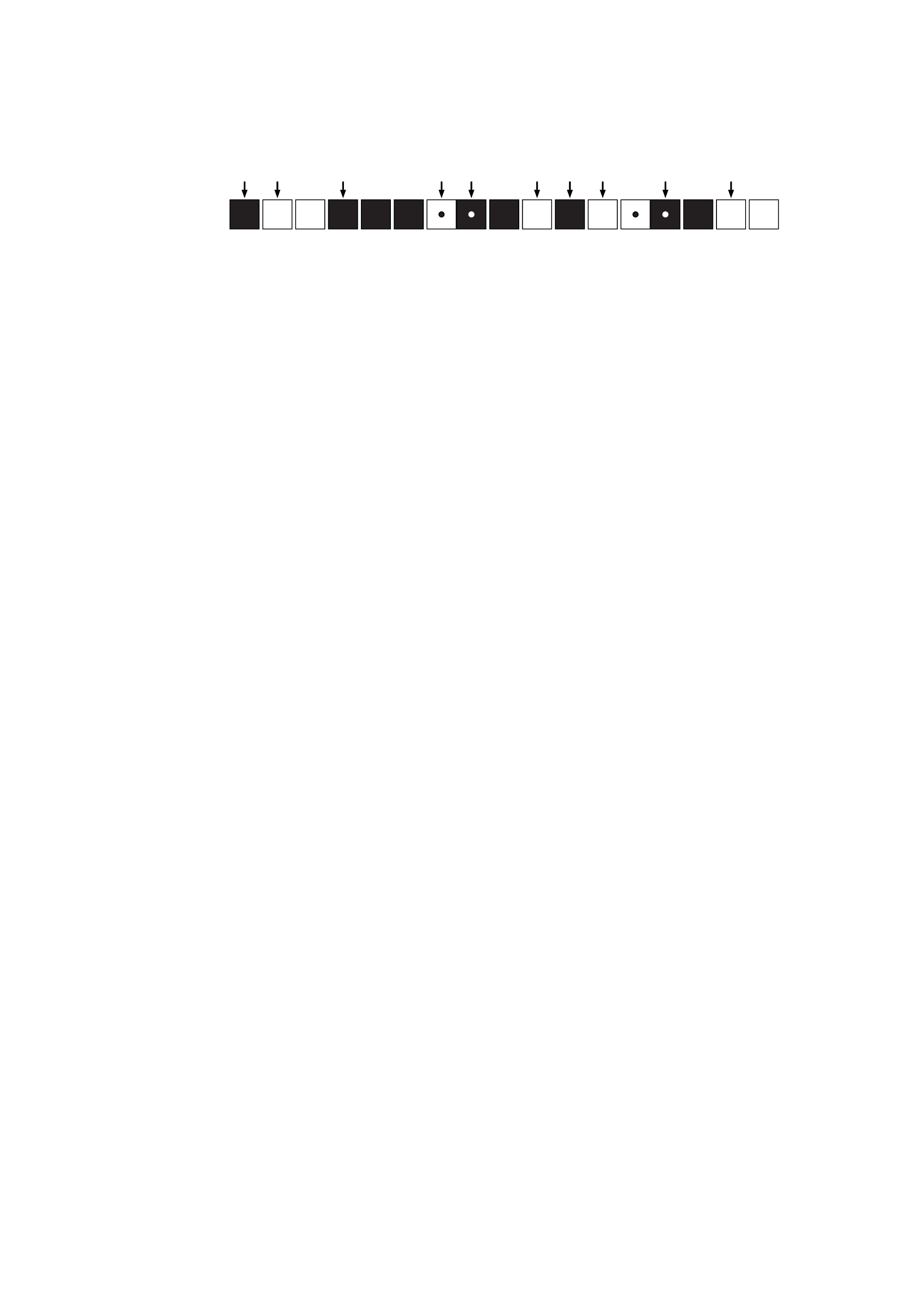}
\end{center}

\noindent Let us then replace each marked square by a decorated square unless it is
part of a domino (the right half of a domino is always marked, the left half
may be marked or unmarked), and replace each remaining black square by a white
square, unless it is part of a domino:

\begin{center}
\includegraphics[width=5.5in]{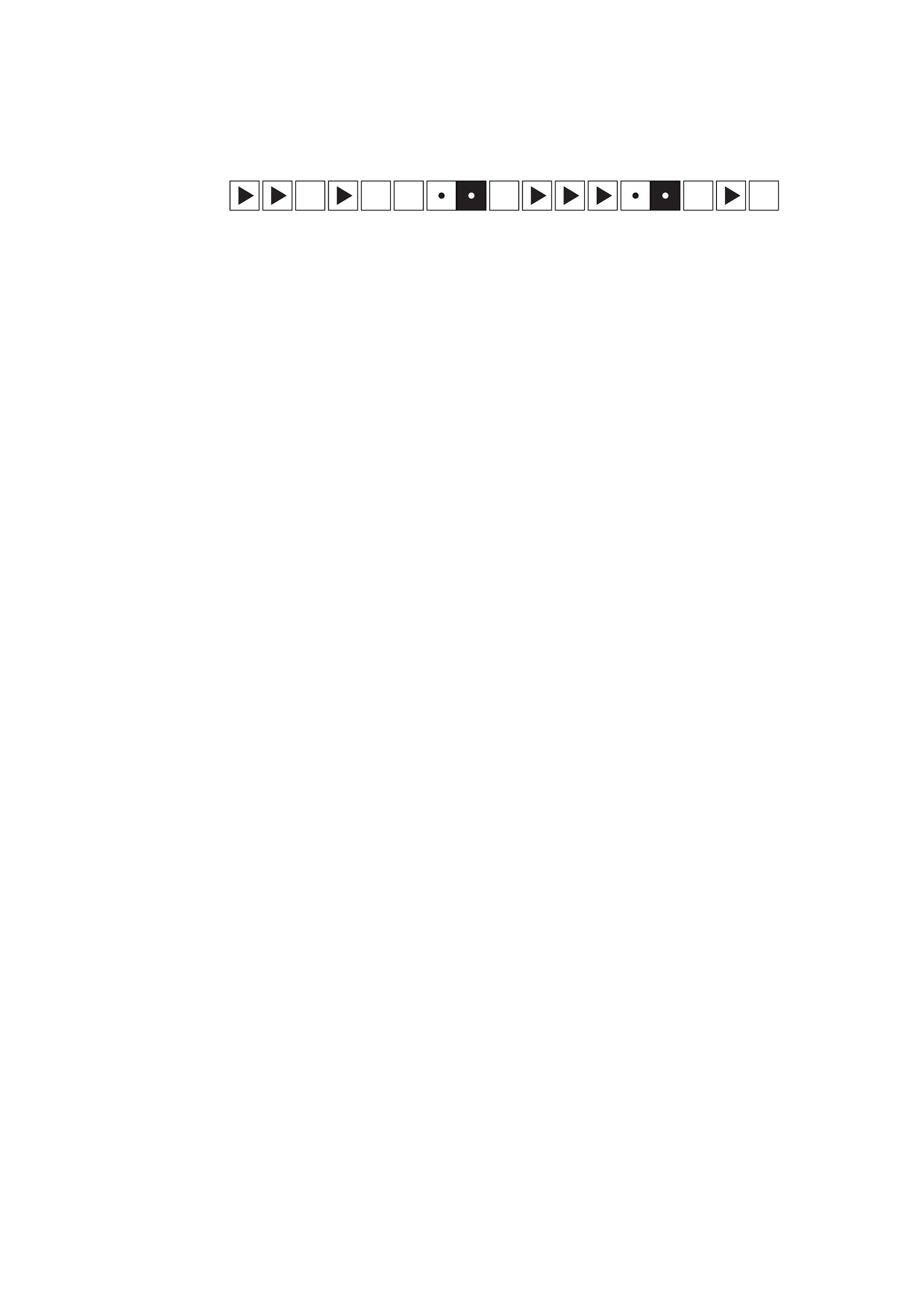}
\end{center}

\noindent Clearly, the original coloring can be recovered from this new
arrangement. Finally, we remove the first square of the board, the left half of
each domino, and the white dots from the right halves of the dominos:

\begin{center}
\includegraphics[height=0.322in]{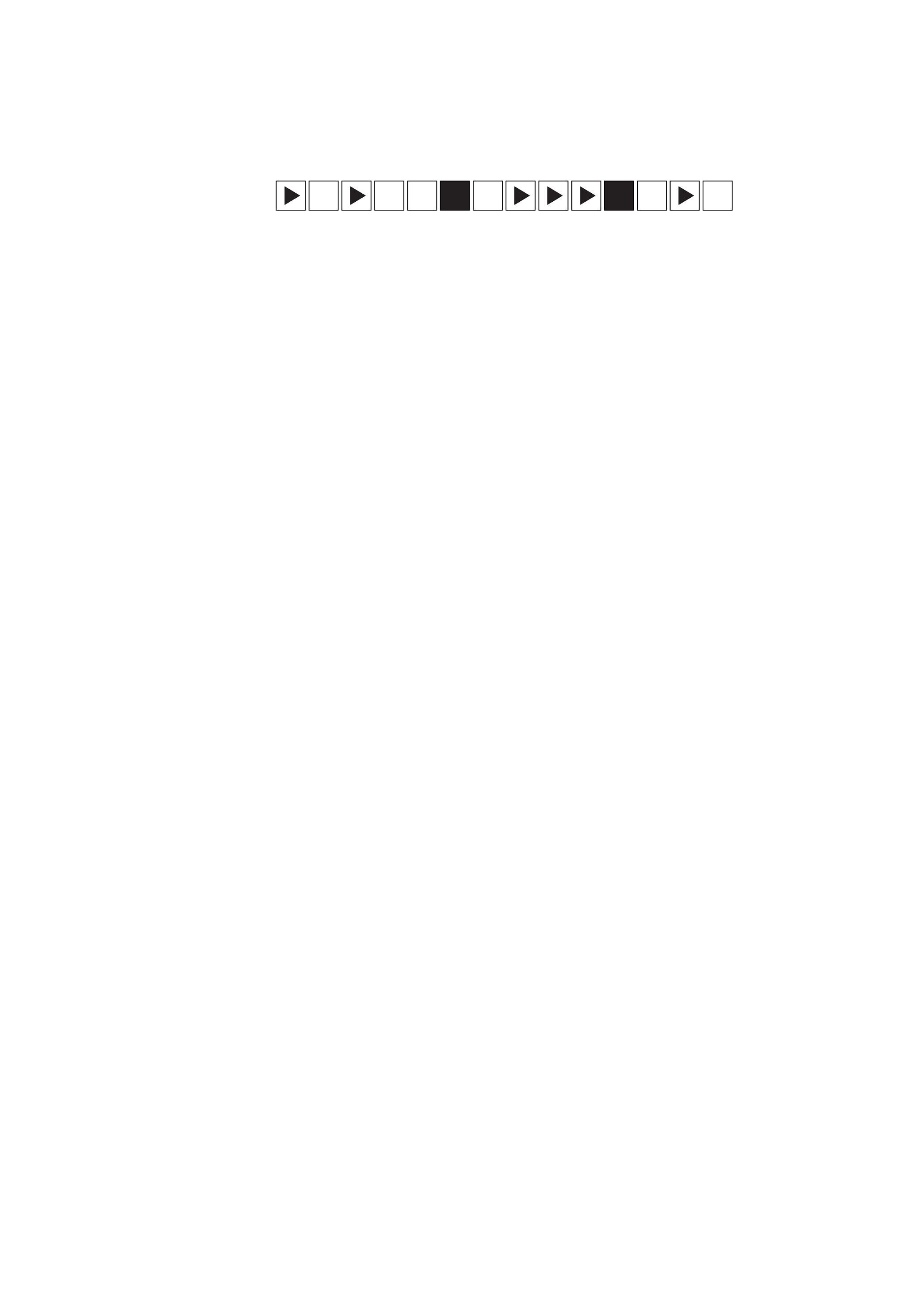}
\end{center}

\noindent This new arrangement belongs to $\mathcal{B}_{m-1-r,r}^{+}$, since the
first white square after the last domino in the original arrangement becomes a
decorated square in the new arrangement.

The above construction is indeed a bijection and its inverse can be constructed
as follows. Given an arrangement in $\mathcal{B}_{m-1-r,r}^{+}$, replace each
black square by a domino, add a new black square to the left end of the board,
and color the squares (outside the dominos) from left to right, changing the color at
each decorated square.
\end{proof}

It remains to prove that $T_{n,r}=W_{n,r}$. The key ingredient for the proof
is given by the following lemma.

\begin{lemma}\label{lemma conjugation}
For all $0\leq r\leq n-1$, we have $\left\vert \mathcal{B}_{n,r}%
^{+,\text{\textup{o}}}\right\vert =\left\vert \mathcal{B}_{n,r}%
^{-,\text{\textup{e}}}\right\vert +\left(  -1\right)  ^{r+1}$.
\end{lemma}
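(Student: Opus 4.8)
The plan is to read the asserted equality as a signed enumeration and prove it by a sign-reversing involution. Give each arrangement of $\mathcal{B}_{n,r}^{+,\mathrm{o}}$ the sign $+1$ and each arrangement of $\mathcal{B}_{n,r}^{-,\mathrm{e}}$ the sign $-1$; the identity says precisely that the total signed count over $\mathcal{B}_{n,r}^{+,\mathrm{o}}\sqcup\mathcal{B}_{n,r}^{-,\mathrm{e}}$ equals $(-1)^{r+1}$, so it suffices to construct an involution on this disjoint union that interchanges the two classes at all but one arrangement, the surviving fixed point carrying sign $(-1)^{r+1}$. The first step is to record the structure forced by the weight. Since the last cell is never black, an arrangement has positive weight exactly when cell $n-1$ is black, and then the last black square occupies cell $n-1$, so the lone square to its right is cell $n$: it is decorated in the $+$ case and white in the $-$ case. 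Hence $\mathcal{B}_{n,r}^{+,\mathrm{o}}$ consists of the arrangements with a decorated cell $n$ preceded by a black block of odd length ending at cell $n-1$, whereas $\mathcal{B}_{n,r}^{-,\mathrm{e}}$ is the union of the arrangements with a white cell $n$ preceded by a black block of even length $\ge 2$ ending at cell $n-1$, together with the weight-$0$ arrangements, in which cell $n-1$ is not black and every square beyond the last black square is white.

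Second, I would construct the involution $\psi$ acting at the right end of the board. Informally, $\psi$ should exchange ``odd terminal block with decorated last square'' for ``even terminal block with white last square,'' and should fold the weight-$0$ family into this exchange by reading the position of the last black square: sliding that black square rightward to cell $n-1$ turns a long all-white tail into a positive even block, and the reverse move recreates the tail. Throughout, the number $r$ of black squares is preserved and the sign is reversed. I would build $\psi$ by scanning the board from the right for the first place where this terminal data can be toggled, so that $\psi$ is manifestly self-inverse. The recursion obtained by peeling the leftmost cell, namely $a_{n,r}=2a_{n-1,r}+a_{n-1,r-1}$ for $a_{n,r}:=\left\vert\mathcal{B}_{n,r}^{+,\mathrm{o}}\right\vert-\left\vert\mathcal{B}_{n,r}^{-,\mathrm{e}}\right\vert$ (valid for $1\le r\le n-2$, with base row $a_{n,0}=-1$, since a non-black leftmost cell offers two colours and a black one offers a single choice that does not disturb the terminal block), is both a consistency check and a possible inductive scaffold: the constant-in-$n$ values $(-1)^{r+1}$ satisfy it, because $2(-1)^{r+1}+(-1)^{r}=(-1)^{r+1}$.

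Third, identify the fixed point. The natural candidate is the arrangement formed by $n-r-1$ white squares, then $r$ black squares, then a single last square that is decorated when $r$ is odd and white when $r$ is even. When $r$ is odd this lies in $\mathcal{B}_{n,r}^{+,\mathrm{o}}$ (odd terminal block, decorated last square) and carries sign $+1=(-1)^{r+1}$; when $r$ is even it lies in $\mathcal{B}_{n,r}^{-,\mathrm{e}}$ (even terminal block, white last square) and carries sign $-1=(-1)^{r+1}$. One checks that this is exactly the arrangement on which the terminal toggle has nothing to act, and that it is the unique such arrangement; the extreme cases ($r=0$, giving the all-white board, and $n=2,\ r=1$, giving a single black square followed by a decorated square) confirm the pattern.

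The main obstacle is that the two halves of $\mathcal{B}_{n,r}^{-,\mathrm{e}}$ are not symmetric. A weight-$0$ arrangement may have its last black square far from the right end, with a long block of \emph{forced} white tail squares, whereas an arrangement of positive weight has a single forced square at cell $n$ and many more freely coloured interior squares. Thus $\psi$ cannot simply preserve the interior colours: as it relocates the last black square it must trade forced tail squares against freely coloured interior squares, and this is precisely where the delicacy lies. Making one global rule simultaneously an involution, sign-reversing, and possessed of exactly one fixed point across this weight-$0$/positive-weight divide is the crux of the argument---which is surely why the authors flag it as the trickiest step---and verifying these properties, together with the boundary case $r=n-1$ in which the terminal block reaches the left end of the board, is where I expect the real work to concentrate.
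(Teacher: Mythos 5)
You set up the right framework (a sign-reversing involution on $\mathcal{B}_{n,r}^{+,\mathrm{o}}\sqcup\mathcal{B}_{n,r}^{-,\mathrm{e}}$ with exactly one fixed point), your structural description of the two sets is correct, and your candidate fixed point is precisely the paper's exceptional arrangement $\varepsilon^{\pm}$: $n-r-1$ white squares, then $r$ black squares, then a last square that is decorated or white according to the parity of $r$. But the proof has a hole exactly where its content should be: the involution $\psi$ is never defined. You describe what it ``should'' do and explicitly defer the construction to ``where I expect the real work to concentrate,'' which is the lemma itself. The difficulty you correctly sense is quantitative: toggling the terminal black block between odd and even length changes the number of black squares in that block by one, so the map must create or destroy a black square elsewhere on the board while remaining an involution that preserves $r$. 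The paper's conjugation does this by walking left from the first square $B$ of the terminal block (or from the last cell, when the weight is $0$) to the first non-white square $A$ --- which may sit arbitrarily deep in the interior --- and exchanging the roles of black and decorated between $A$ and the cell at or just before $B$, while simultaneously toggling the last cell between white and decorated. Your ``terminal toggle'' and ``sliding the last black square rightward to cell $n-1$'' specify none of this: not what fills the vacated cell, not how $r$ is conserved, and not how a positive-weight arrangement of weight $k\geq 2$ is matched with one of weight $k\pm 1$. As written, the proposal does not prove the lemma.

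It is worth noting that the recurrence you mention in passing, $a_{n,r}=2a_{n-1,r}+a_{n-1,r-1}$ for $a_{n,r}=\left\vert\mathcal{B}_{n,r}^{+,\mathrm{o}}\right\vert-\left\vert\mathcal{B}_{n,r}^{-,\mathrm{e}}\right\vert$, obtained by peeling off the leftmost cell, is in fact correct for $1\leq r\leq n-2$ (the terminal black block ends at cell $n-1$, so it can reach cell $1$ only when $r=n-1$; hence deleting the first cell changes neither the weight nor the $+/-$ class), and together with the directly checked boundary values $a_{n,0}=-1$ and $a_{n,n-1}=(-1)^{n}$ it gives a complete induction proving $a_{n,r}=(-1)^{r+1}$. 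Had you promoted this from a ``consistency check'' to the argument itself and verified those three items, you would have a valid proof, genuinely different from (and arguably shorter than) the paper's bijective one, though it forgoes the explicit correspondence. As submitted, neither route is carried to completion.
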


\begin{proof}
We give an \textquotedblleft almost bijection\textquotedblright\ between the
sets $\mathcal{B}_{n,r}^{+,\text{o}}$ and $\mathcal{B}_{n,r}^{-,\text{e}}$,
leaving one arrangement out from $\mathcal{B}_{n,r}^{+,\text{o}}$ if $r$ is
odd, and leaving one arrangement out from $\mathcal{B}_{n,r}^{-,\text{e}}$ if
$r$ is even. Let us consider an arrangement $\alpha\in\mathcal{B}_{n,r}^{+,\text{o}}\cup\mathcal{B}_{n,r}^{-,\text{e}}$
of weight $k$, and let us examine its last squares. The very last square
(i.e., the rightmost square of the board) is either white or decorated.
Before that, there is a sequence of $k$ black squares; let
us denote the first (leftmost) one of these squares by $B$. If $k=0$, then let
us define $B$ to be the last square of the board (no matter whether it is white or decorated).
Walking from square $B$ to the left, let
us denote the first non-white (i.e., either black or decorated) square by $A$,
provided there is such a square:

\begin{center}
\includegraphics[width=5.5in]{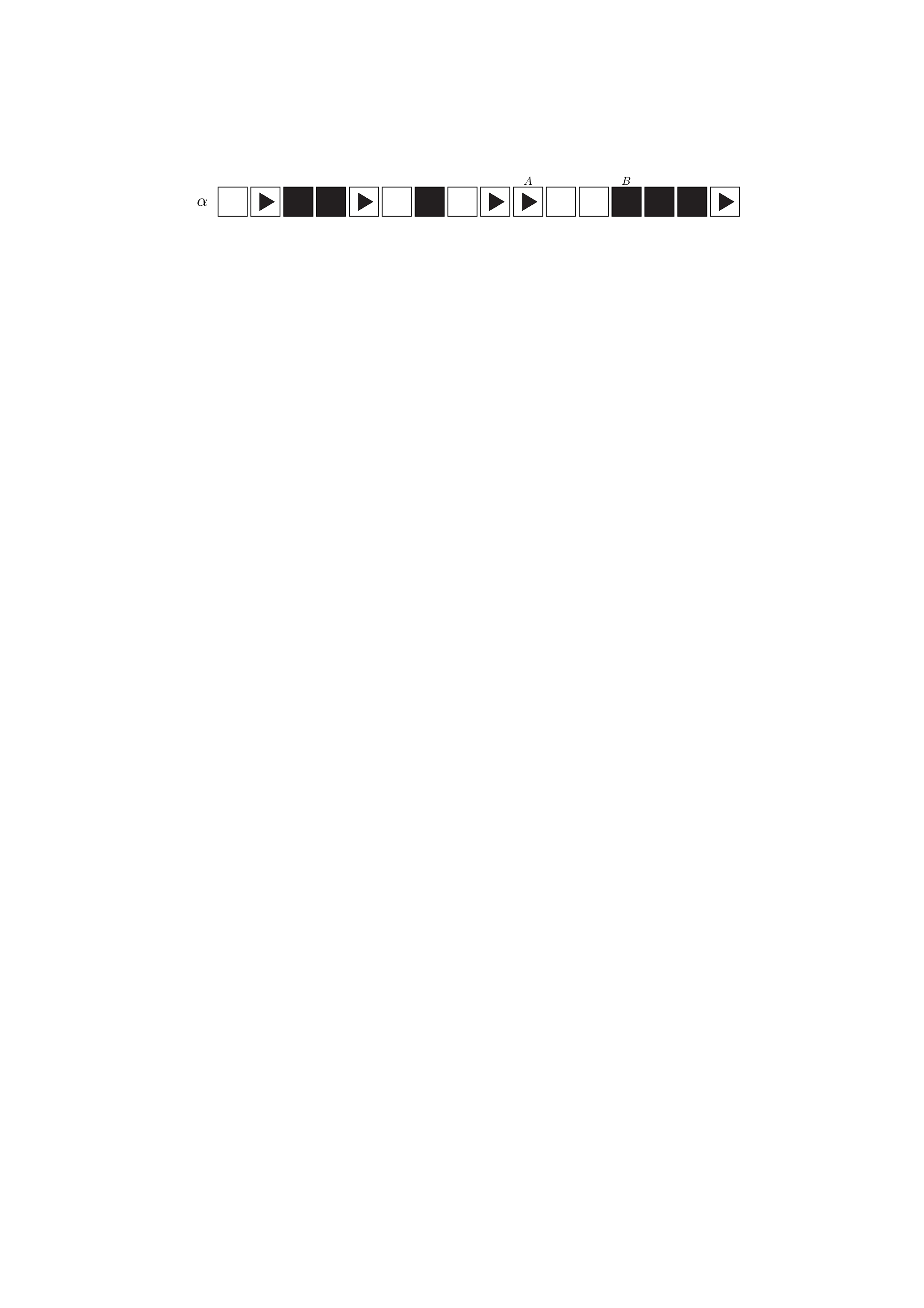}
\end{center}

The \emph{conjugate arrangement }$\overline{\alpha}$ is constructed
in the following way. If $A$ is a decorated square, then we replace $A$ by a
black square and $B$ by a white square. If $A$ is a black square, then we
replace $A$ by a decorated square and replace the white square preceding $B$
by a black square ($B$ remains unchanged). In addition, in both cases we
change the last square of the board: if it is a white square, then we change
it to a decorated square; if it is a decorated square, then we change it to a
white square. The arrangement $\alpha$ in the above example corresponds to the first case with $k=3$ ($\alpha\in\mathcal{B}_{n,r}^{+,\text{o}}$):

\begin{center}
\includegraphics[width=5.5in]{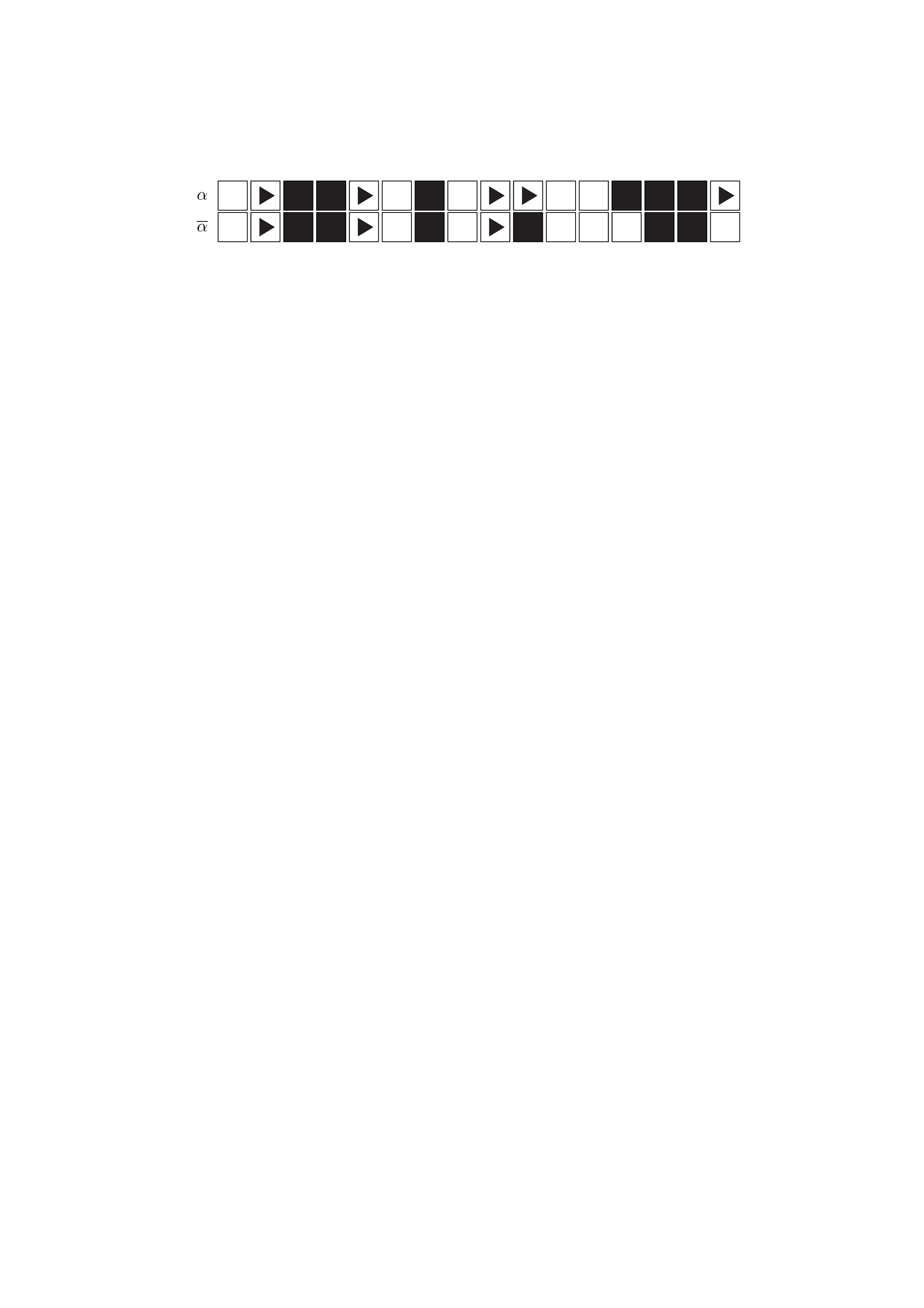}
\end{center}

\noindent Another example illustrating the second case with $k=0$ ($\beta\in\mathcal{B}_{n,r}^{-,\text{e}}$):

\begin{center}
\includegraphics[width=5.5in]{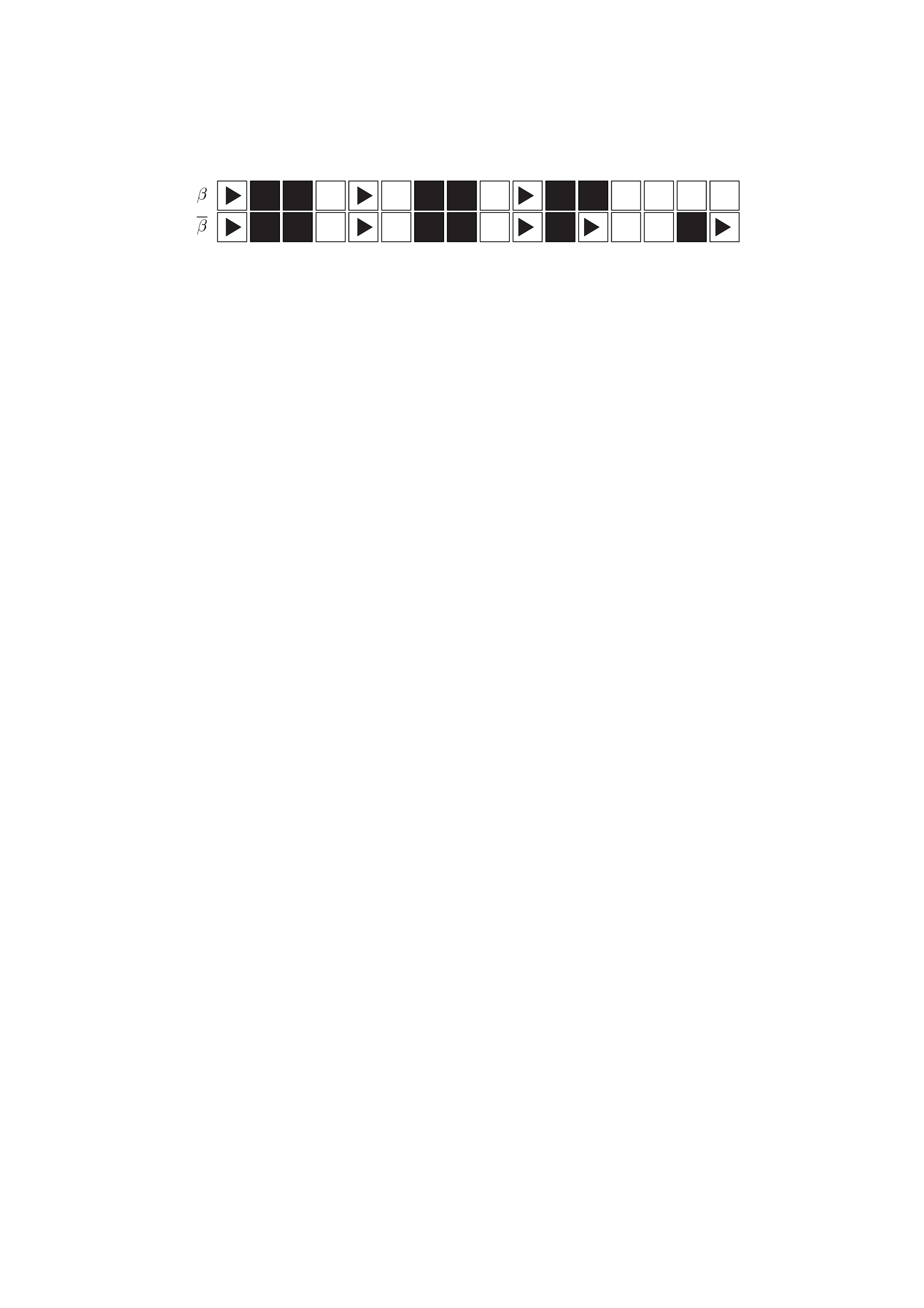}
\end{center}

The conjugate arrangement is not defined if square $A$ does not exist, i.e.,
if $k=r$ and all the squares to the left of the black squares are white.
There is only one such arrangement in $\mathcal{B}_{n,r}^{+,\text{o}}\cup\mathcal{B}_{n,r}^{-,\text{e}}$,
namely the arrangement $\varepsilon^{+}\in\mathcal{B}_{n,r}^{+,\text{o}}$ below if $r$ is odd (here, $r=5$)
and the arrangement $\varepsilon^{-}\in\mathcal{B}_{n,r}^{-,\text{e}}$ below if $r$ is even (here, $r=6$):

\begin{center}
\includegraphics[width=5.5in]{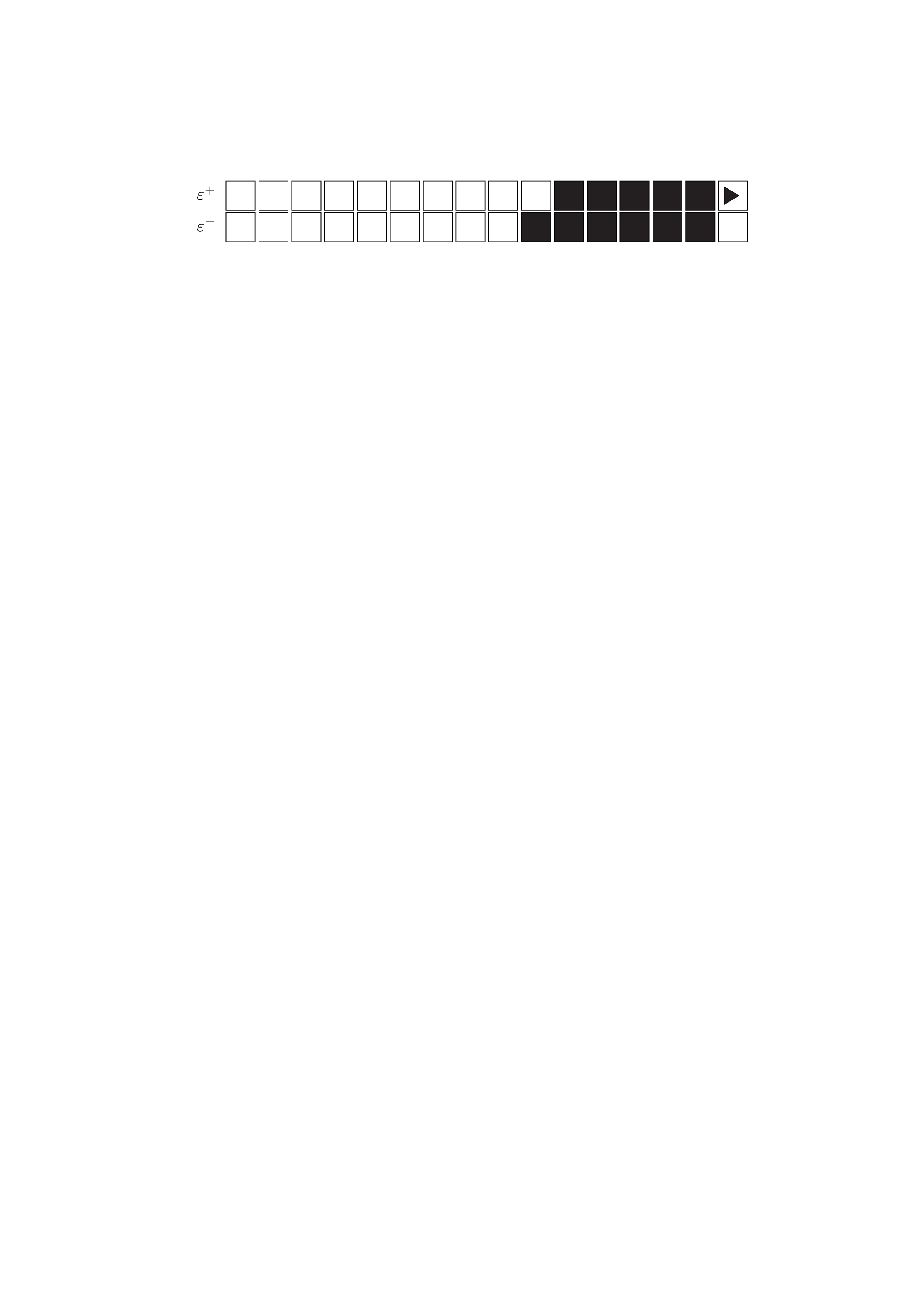}
\end{center}

\noindent Conjugation is a permutation of order two on the set $\mathcal{B}%
_{n,r}^{+,\text{\textup{o}}}\cup\mathcal{B}_{n,r}^{-,\text{\textup{e}}%
}\setminus\left\{  \varepsilon^{+},\varepsilon^{-}\right\}  $ that changes the
parity of the weight, and it also changes the \textquotedblleft
sign\textquotedblright of the arrangement\footnote{This is actually true for all arrangements in
$\mathcal{B}_{n,r}\setminus\left\{\varepsilon^{+},\varepsilon^{-}\right\}$ except for the
\textquotedblleft positive\textquotedblright ones of weight $0$.}.
Therefore, if $r$ is odd, then
conjugation provides a bijection between $\mathcal{B}_{n,r}^{+,\text{o}}%
\setminus\left\{  \varepsilon^{+}\right\}  $ and $\mathcal{B}_{n,r}%
^{-,\text{e}}$, hence $\left\vert \mathcal{B}_{n,r}^{+,\text{\textup{o}}%
}\right\vert =\left\vert \mathcal{B}_{n,r}^{-,\text{\textup{e}}}\right\vert
+1$. Similarly, if $r$ is even, then conjugation provides a bijection between
$\mathcal{B}_{n,r}^{+,\text{o}}$ and $\mathcal{B}_{n,r}^{-,\text{e}}%
\setminus\left\{  \varepsilon^{-}\right\}  $, hence $\left\vert \mathcal{B}%
_{n,r}^{+,\text{\textup{o}}}\right\vert =\left\vert \mathcal{B}_{n,r}%
^{-,\text{\textup{e}}}\right\vert -1$.
\end{proof}

\begin{proposition}\label{prop T=W}
For all $0\leq r \leq n-1$, we have $T_{n,r}=W_{n,r}$.
\end{proposition}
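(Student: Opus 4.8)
The plan is to reduce the whole identity to a single direct count, namely that of the even-weight arrangements $\mathcal{B}_{n,r}^{\text{e}}$. Starting from Proposition~\ref{prop T}, I would split the arrangements counted by $T_{n,r}$ according to the parity of their weight,
\[
T_{n,r}=\left\vert \mathcal{B}_{n,r}^{+}\right\vert =\left\vert \mathcal{B}_{n,r}^{+,\text{e}}\right\vert +\left\vert \mathcal{B}_{n,r}^{+,\text{o}}\right\vert ,
\]
and then use Lemma~\ref{lemma conjugation} to trade the odd piece for an even one, $\left\vert \mathcal{B}_{n,r}^{+,\text{o}}\right\vert =\left\vert \mathcal{B}_{n,r}^{-,\text{e}}\right\vert +(-1)^{r+1}$. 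Since $\mathcal{B}_{n,r}^{\text{e}}$ is the disjoint union of $\mathcal{B}_{n,r}^{+,\text{e}}$ and $\mathcal{B}_{n,r}^{-,\text{e}}$, substituting and recombining yields
\[
T_{n,r}=\left\vert \mathcal{B}_{n,r}^{+,\text{e}}\right\vert +\left\vert \mathcal{B}_{n,r}^{-,\text{e}}\right\vert +(-1)^{r+1}=\left\vert \mathcal{B}_{n,r}^{\text{e}}\right\vert +(-1)^{r+1}.
\]
Comparing this with the definition of $W_{n,r}$, it suffices to establish $\left\vert \mathcal{B}_{n,r}^{\text{e}}\right\vert =2^{n-r}\sum_{k=0}^{\lfloor r/2\rfloor}\binom{n-2-2k}{r-2k}$.

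The core of the argument is this last count, which I would organize by the weight $2k$ of the arrangement. An arrangement of $\mathcal{B}_{n,r}^{\text{e}}$ of weight $2k$ has the $2k$ cells immediately preceding the final cell occupied by a maximal run of black squares (for $k=0$ this run is empty). The cell just to the left of this run, in position $n-2k-1$ (for $k=0$ this is the second-to-last cell), is non-black, since otherwise the run would be longer; the remaining $r-2k$ black squares then lie among the $n-2-2k$ cells in positions $1,\dots,n-2k-2$, which can be chosen in $\binom{n-2-2k}{r-2k}$ ways. Because the final cell is never black and each of the $n-r$ non-black cells is independently white or decorated, these contribute a free factor of $2^{n-r}$. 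Hence there are exactly $2^{n-r}\binom{n-2-2k}{r-2k}$ arrangements of weight $2k$, and summing over $0\le k\le \lfloor r/2\rfloor$ produces the desired value of $\left\vert \mathcal{B}_{n,r}^{\text{e}}\right\vert$. This step is squarely in the spirit of the paper, since the count is controlled entirely by the squares at the right end of the board.

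The only real obstacle is bookkeeping at the boundary. When the final black run reaches the left end of the board, which happens precisely when $n=2k+1$ and forces $r=2k$, there is no cell to its left and the formula reads $\binom{-1}{0}$, so the count must be interpreted with the convention $\binom{-1}{0}=1$; a direct check confirms that there are indeed exactly $2^{n-r}=2$ such arrangements, matching the formula. The hypothesis $0\le r\le \tfrac{m}{2}-1$, which gives $n=m-1-r\ge r+1$ and hence $n\ge 2k+1$ throughout, guarantees that the truly degenerate case $n=2k$ never arises, so this single boundary convention suffices and the count holds uniformly. Chaining together the three displayed equalities above then gives $T_{n,r}=W_{n,r}$, completing the proof.
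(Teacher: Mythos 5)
Your proof is correct and follows essentially the same route as the paper: split $\left\vert \mathcal{B}_{n,r}^{+}\right\vert$ by weight parity, apply Lemma~\ref{lemma conjugation} to obtain $\left\vert \mathcal{B}_{n,r}^{\text{e}}\right\vert +(-1)^{r+1}$, and then count the weight-$2k$ arrangements directly as $2^{n-r}\binom{n-2-2k}{r-2k}$. Your extra attention to the boundary case $n=2k+1$ (and the observation that $r\le n-1$ rules out anything worse) is a welcome refinement but does not change the argument.
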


\begin{proof}
We may express $T_{n,r}$ with the aid of the previous lemma:%
\[
T_{n,r}=\left\vert \mathcal{B}_{n,r}^{+}\right\vert =\left\vert \mathcal{B}%
_{n,r}^{+,\text{\textup{e}}}\right\vert +\left\vert \mathcal{B}_{n,r}%
^{+,\text{\textup{o}}}\right\vert =\left\vert \mathcal{B}_{n,r}%
^{+,\text{\textup{e}}}\right\vert +\left\vert \mathcal{B}_{n,r}%
^{-,\text{\textup{e}}}\right\vert +\left(  -1\right)  ^{r+1}=\left\vert
\mathcal{B}_{n,r}^{\text{\textup{e}}}\right\vert +\left(  -1\right)  ^{r+1}.
\]
It remains to prove that%
\[
\left\vert \mathcal{B}_{n,r}^{\text{\textup{e}}}\right\vert =\sum
_{k=0}^{\left\lfloor \frac{r}{2}\right\rfloor }2^{n-r}\binom{n-2-2k}{r-2k}.
\]

We claim that the summand counts the arrangements in $\mathcal{B}_{n,r}$ of
weight $2k$. Such an arrangement can be built as follows. First we put an
interval of $2k$ black squares on the board such that the last one of these
black squares is the second-to-last square of the board. Then we have $n-2-2k$
places where we can put the remaining $r-2k$ black squares:

\begin{center}
\includegraphics[width=5.5in]{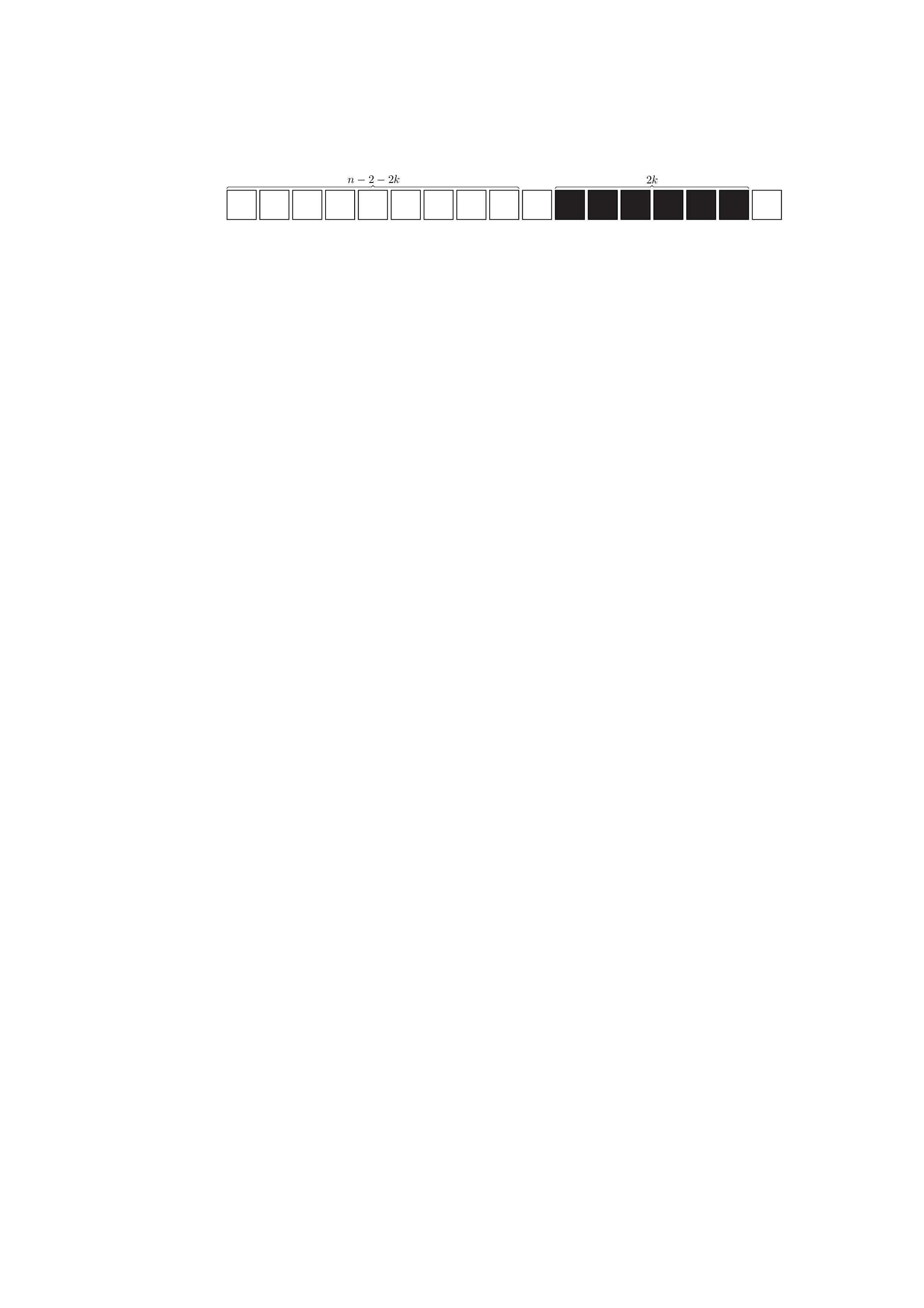}
\end{center}

\noindent Thus there are $\binom{n-2-2k}{r-2k}$ possibilities regarding the
placement of the black squares, and each one of the remaining $n-r$ squares
can be either white or decorated, hence the number of arrangements of weight
$2k$ is indeed%
\[
2^{n-r}\binom{n-2-2k}{r-2k}.
\]

\end{proof}

\medskip

\noindent AMS Classification Numbers: 05A19, 11B65

\begin{thebibliography}{99}

\bibitem{Ben}
A.~T.~Benjamin and J.~J.~Quinn, \emph{Proofs that Really Count: The Art of Combinatorial Proof}, Mathematical Association of America, Washington DC, 2003.

\bibitem{GapInfC}
M.~Couceiro, E.~Lehtonen and T.~Waldhauser, \emph{The arity gap of polynomial functions}, manuscript.

\bibitem{Gould}
H.~W.~Gould, \emph{The case of the strange binomial identities of Professor Moriarty}, The Fibonacci Quarterly \textbf{10} (1972), 381--391.

\bibitem{Zb}
P.~Paule, A.~Riese and M.~Schorn, \emph{The Paule/Schorn Implementation of Gosper's and Zeilberger's Algorithms}, \url{http://www.risc.uni-linz.ac.at/research/combinat/risc/software/PauleSchorn/index.php}

\bibitem{Hyper}
M.~Petkov\v{s}ek, \emph{Algorithms Poly and Hyper}, \url{http://www.math.upenn.edu/~wilf/Hyper}

\bibitem{A=B}
M.~Petkov\v{s}ek, H.~Wilf and D. Zeilberger, $\mathit{A=B}$, A K Peters, Ltd., Wellesley, MA, 1996.

\bibitem{Sh}
M.~Shattuck, \emph{Combinatorial proofs of some Moriarty-type binomial coefficient identities}, Integers \textbf{6} (2006), A35.

\bibitem{OEIS}
N.~J.~A.~Sloane, \emph{The On-Line Encyclopedia of Integer Sequences}, \url{www.research.att.com/~njas/sequences/}

\bibitem{Ekhad}
D.~Zeilberger, \emph{EKHAD}, \url{http://www.math.rutgers.edu/~zeilberg/tokhniot/EKHAD}

\end{thebibliography}
\end{document}